\numberwithin{equation}{section}
\def\ps@pprintTitle{%
   \let\@oddhead\@empty
   \let\@evenhead\@empty
   \def\@oddfoot{\reset@font\hfil}%
   \def\@evenfoot{\reset@font\hfil}%
}
\newcommand{\R}{\mathbb{R}}
\newcommand{\Q}{\mathbb{Q}}
\newcommand{\N}{\mathbb{N}}
\newcommand{\id}{\mathrm{id}}
\newcommand{\im}{\mathrm{im}}
\newcommand{\interior}{\mathrm{int}}
\newcommand{\sky}{\mathrm{sky}}
\newcommand{\grad}{\mathrm{grad}}
\theoremstyle{plain}
\newtheorem{thm}{Theorem}[section]
\newtheorem{cor}[thm]{Corollary}
\newtheorem{lem}[thm]{Lemma}
\newtheorem{prop}[thm]{Proposition}
\newtheorem{conjec}[thm]{Conjecture}
\newtheorem{result}[thm]{Result}
\theoremstyle{definition}
\newtheorem{defin}[thm]{Definition}
\newtheorem{rem}[thm]{Remark}
\newtheorem{example}[thm]{Example}
\newcommand\theoref{Theorem~\ref}
\newcommand\lemref{Lemma~\ref}
\newcommand\remref{Remark~\ref}
\newcommand\propref{Proposition~\ref}
\newcommand\corolref{Corollary~\ref}
\newcommand\defref{Definition~\ref}
\newcommand\conjref{Conjecture~\ref}
\journal{Journal of Geometry and Physics}
\begin{document}

\begin{frontmatter}

\title{Topological consequences of null-geodesic refocusing and applications to $Z^x$ manifolds}
\author[inst1]{Friedrich Bauermeister}
\ead{friedrich.bauermeister.gr@dartmouth.edu}
\affiliation[inst1]{organization={Department of Mathematics}, institution={Dartmouth College}}

\begin{abstract}
Let $(M,h)$ be a connected, complete Riemannian manifold, $x\in M$, and $l>0$. Then $M$ is called a $Z^x$ manifold if all geodesics starting at $x$ return to $x$, and it is called a $Y^x_l$ manifold if every unit-speed geodesic starting at $x$ returns to $x$ at time $l$. It is unknown whether there are $Z^x$ manifolds that are not $Y^x_l$ manifolds for any $l>0$. By the Bérard-Bergery theorem, any $Y^x_l$ manifold of dimension at least $2$ is compact with finite fundamental group. We prove the same result for $Z^x$ manifolds $M$ for which all unit-speed geodesics starting at $x$ return to $x$ in uniformly bounded time. We also prove that any $Z^x$ manifold $(M,h)$ with $h$ analytic is a $Y^x_l$ manifold for some $l>0$. We start by defining a class of globally hyperbolic spacetimes (called observer-refocusing) such that any $Z^x$ manifold is the Cauchy surface of some observer-refocusing spacetime. We then prove that under suitable conditions the Cauchy surfaces of observer-refocusing spacetimes are compact with finite fundamental group, and we show that analytic observer-refocusing spacetimes of dimension at least $3$ are strongly refocusing. We end by stating a contact-theoretic conjecture analogous to our results in Riemannian and Lorentzian geometry.
\end{abstract}

\end{frontmatter}

\section{Introduction}
Throughout the paper, all Riemannian manifolds and Lorentzian manifolds will be assumed to be smooth, connected, and without boundary. Further, all geodesics will be assumed to be maximal unless otherwise stated. If $(X,g)$ is a Lorentzian manifold, $p \in X$ and $v \in T_p X$, then $\alpha_v$ will always denote the geodesic defined through $\alpha_v(0)=p$ and $\alpha_v'(0)=v$.

\begin{defin}[$Y^{(x,y)}_l$ manifold]
Let $(M,h)$ be a complete Riemannian manifold, $x,y \in M$, and $l>0$. We say that $(M,h)$ is a \textit{$Y^{(x,y)}_l$ manifold} if for all geodesics $\alpha$ in $(M,h)$ with $\alpha(0)=x$ and $h(\alpha'(0),\alpha'(0))=1$ we have $\alpha(l)=y$. If $x=y$, we call $(M,h)$ a \textit{$Y^x_l$ manifold}. Note also that every $Y^{(x,y)}_l$ manifold is a $Y^x_{2l}$ manifold (see \cite{ChernovRudyak2008}).
\end{defin}

\begin{defin}[$Z^{(x,y)}$ manifold]
Let $(M,h)$ be a complete Riemannian manifold and $x,y \in M$. We say that $(M,h)$ is a \textit{$Z^{(x,y)}$ manifold} if for all geodesics $\alpha$ in $(M,h)$ with $\alpha(0)=x$ there is some $t>0$ with $\alpha(t)=y$. If $x=y$ we call $(M,h)$ a \textit{$Z^x$ manifold}. In contrast to $Y^{(x,y)}_l$ manifolds, it is not known whether every $Z^{(x,y)}$ manifold is a $Z^x$ manifold.
\end{defin}

\begin{defin}[strongly refocusing spacetime]\label{def:StronglyRefocusingSpacetime}
Let $(X,g)$ be a spacetime and let $p,q \in X$ with $p\neq q$. We say that $(X,g)$ is \textit{strongly refocusing with respect to $p$ and $q$} if all null-geodesics $\alpha$ in $(X,g)$ going through $p$ also go through $q$. We say that a spacetime $(X,g)$ is \textit{strongly refocusing} if there are $p\neq q$ such that $(X,g)$ is strongly refocusing with respect to $p$ and $q$.
\end{defin}

\begin{defin}[observer-refocusing spacetime]
Let $(X,g)$ be a spacetime, $p\in X$ and $\gamma$ some timelike curve in $X$. We say that $(X,g)$ is \textit{future (resp. past) observer-refocusing with respect to $p$ and $\gamma$} if for all future-oriented (resp. past-oriented) null-geodesics $\alpha$ in $(X,g)$ with $\alpha(0)=p$ there is some $t>0$ with $\alpha(t) \in \im(\gamma)$. We say a spacetime is \textit{future (resp. past) observer-refocusing} if it is future (resp. past) observer-refocusing with respect to some point $p\in X$ and some timelike curve $\gamma$ in $X$. 
\end{defin}

Future observer-refocusing spacetimes have the following physical interpretation. The worldline of physical observers through spacetime is a future-directed timelike curve. A spacetime is observer-refocusing if there is some point in spacetime and some observer, such that if light is emitted from the point in spacetime in every direction, the observer will, over the course of their life, be able to see all that light. Every $Y^{(x,y)}_l$ manifold is a $Z^{(x,y)}$ manifold and every strongly refocusing spacetime is a future observer-refocusing spacetime and a past observer-refocusing spacetime. It turns out that the roles of $p$ and $q$ in \defref{def:StronglyRefocusingSpacetime} are symmetric, see \cite{Kinlaw2011}. Without loss of generality, we may assume that $q \in J^+(p)$. By picking $\gamma$ to be any timelike curve through $q$, the spacetime will be future observer-refocusing with respect to $p$ and $\gamma$. Conversely, if we pick $\gamma$ to be any timelike curve through $p$, then the spacetime will be past observer-refocusing with respect to $q$ and $\gamma$. Throughout the rest of the paper we will use "observer-refocusing" to mean future observer-refocusing. Analogous results for past observer-refocusing spacetimes can be obtained by reversing the time-orientation of the spacetime.

\begin{defin}[spacetime associated to a Riemannian manifold]
Let $(M,h)$ be a Riemannian manifold. Let $X=M\times \R$ and let $g = h \oplus -dt^2$. Then $(X,g)$ is a Lorentzian manifold. We equip $X$ with the time-orientation given by the vectorfield $\partial_t$ to make it a spacetime. We call $(X,g)$ the \textit{spacetime associated to the Riemannian manifold $(M,h)$.} 
\end{defin}

\begin{defin}[globally hyperbolic spacetime, Cauchy surface]\label{def:GloballyHyperbolicSpacetime}
A spacetime $(X,g)$ is called \textit{causal} if it admits no closed timelike curves. A spacetime is called \textit{globally hyperbolic} if it is causal and if for all $p,q\in X$ the set $J^+(p) \cap J^-(q)$ is compact. A topological submanifold $S\subset X$ is called a \textit{topological Cauchy surface} of $(X,g)$ if every inextendible timelike curve in $X$ intersects $S$ exactly once. It is called an \textit{acausal} Cauchy surface if every inextendible causal curve in $X$ intersects $S$ exactly once. Smooth Cauchy surfaces which are spacelike are always acausal.
\end{defin}

A spacetime $(X,g)$ being globally hyperbolic is equivalent to the existence of a topological acausal Cauchy surface $S$ of $(X,g)$. Further, any two topological acausal Cauchy surfaces of a given spacetime $(X,g)$ are homeomorphic and a globally hyperbolic spacetime is homeomorphic to $S\times \R$ for any of its topological acausal Cauchy surfaces $S$; see \cite{Geroch1970}, \cite{HawkingEllis}. These results were later proven to also hold in the smooth category: A spacetime $(X,g)$ being globally hyperbolic is equivalent to the existence of a smooth, spacelike Cauchy surface $S$ of $(X,g)$. Further, any two smooth Cauchy surfaces of a given spacetime $(X,g)$ are diffeomorphic and a globally hyperbolic spacetime is diffeomorphic to $S\times \R$ for any of its smooth acausal Cauchy surfaces $S$; see \cite{BernalSánchez2003}. In the rest of this paper, when we talk about Cauchy surfaces, we will always mean smooth, spacelike Cauchy surfaces.

\begin{rem}
When the splitting theorems mentioned above were proven, the definition of a globally hyperbolic spacetime included the requirement that $(X,g)$ be \textit{strongly causal} instead of just causal. It was shown in \cite{BernalSánchez2007} that requiring causality suffices. 
\end{rem}

The existence of Cauchy surfaces ensures that the spacetime is well-behaved and that there are no naked singularities. It turns out that globally hyperbolic spacetimes are exactly the spacetimes obeying the cosmic censorship hypothesis as conjectured by Penrose in \cite{Penrose1998}, making them physically reasonable. 

\begin{defin}[temporal functions]
For a spacetime $(X,g)$, a \textit{temporal function} $\mathcal{T}:X\to \R$ is a smooth function with everywhere past-pointing timelike gradient. A surjective temporal function $\mathcal{T}:X\to \R$ is called \textit{Cauchy} if $\mathcal{T}^{-1}(t)\subset X$ is a Cauchy surface for all $t\in \R$. In \cite{BernalSánchez2003, BernalSánchez2004,BernalSánchez2005} Bernal and Sánchez prove that for every globally hyperbolic spacetime $(X,g)$ there are surjective Cauchy temporal functions $\mathcal{T}: X \to \R$. These results are strengthened and extended further in \cite{BernalSánchez2007}.
\end{defin}

Smooth temporal functions are strictly increasing along every future-directed causal curve. For globally hyperbolic spacetimes $(X,g)$, we can use a surjective Cauchy temporal function to express our metric $g$ in the following useful way.

\begin{rem}\label{rem:SplittingWithNoCrossTerms}
If $(X,g)$ is globally hyperbolic, we can take a surjective Cauchy temporal function $\mathcal{T}:X\to \R$. Let $M=\mathcal{T}^{-1}(0)$. For $p\in X$, let $\gamma_p$ be the maximal integral curve of $\grad(\mathcal{T})$ through $p$. Since $\gamma_p$ is an inextendible causal curve and since $M$ is a Cauchy surface, they intersect in a unique point $x_p\in M$. We define a diffeomorphism $\varphi: X\to M \times \R$ through $p\mapsto (x_p,\mathcal{T}(p))$. We can now identify $X = M\times \R$ through $\varphi$. Under this identification $\mathcal{T}^{-1}(t) = M \times \{t\}$ is a spacelike Cauchy surface for each $t\in \R$. Note that for $v\in T_{(x,t)}(M\times \{t\})$ we have $g(v,\grad(\mathcal{T})) = d\mathcal{T}(v)=0$. This shows that on the splitting $X=M\times \R$ the metric $g$ decomposes as $g=h-A dt^2$, where $A$ is some smooth, positive function, where $h$ is a symmetric $2$-tensor with $h(\partial_t,\cdot)=0$ and so that $h$ restricts to a Riemannian metric on $M\times \{t\}$ for each $t\in \R $.
\end{rem}

\begin{rem}
If $(M,h)$ is a complete Riemannian manifold, then the associated spacetime $(X,g)$ is globally hyperbolic, geodesically complete, and for each $t\in \R$ the submanifold $M \times \{t\} \subset X$ is a smooth spacelike Cauchy surface of $X$.
\end{rem}

\begin{prop}
If $(M,h)$ is a $Y^{(x,y)}_l$ manifold, then the associated spacetime $(X,g)$ is a strongly refocusing spacetime with $p=(x,0)$ and $q=(y,l)$. If $(M,h)$ is a $Z^{(x,y)}$ manifold, then the associated spacetime $(X,g)$ is an observer-refocusing spacetime with $p=(x,0)$ and observer $\gamma:\R \to X,$ $\gamma(t)=(y,t)$. 
\end{prop}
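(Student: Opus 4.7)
The plan is to reduce everything to a description of null geodesics in the warped product $X = M \times \R$ with metric $g = h \oplus -dt^2$. First I would verify the following structural fact: a curve $\alpha(s) = (\beta(s), t(s))$ in $X$ is a geodesic of $g$ if and only if $\beta$ is a geodesic of $h$ and $t$ is an affine function of $s$; the null condition then reads $h(\beta'(s),\beta'(s)) = (t'(s))^2$. This is immediate from the product structure of the Levi-Civita connection for a trivially warped product, since the fiber $(\R,-dt^2)$ is flat and $\partial_t$ is Killing (and actually parallel).

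Next I would exploit the affine freedom to normalize. Any future-directed null geodesic through $p = (x,0)$ can be affinely reparametrized so that $\alpha(0) = p$ and $t'(0) = 1$; the null relation then forces $|\beta'(0)|_h = 1$ and (since $t$ is affine with slope $1$) gives $\alpha(s) = (\beta(s), s)$ for a unit-speed geodesic $\beta$ in $(M,h)$ starting at $x$. Affine reparametrization preserves the image, so it suffices to check that $q$ (respectively $\im(\gamma)$) lies in $\im(\alpha)$.

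For the $Y^{(x,y)}_l$ case, the hypothesis gives $\beta(l) = y$, hence $\alpha(l) = (y,l) = q$. To upgrade this to \emph{all} null geodesics through $p$ (including past-directed ones), I would apply the parameter reversal $s \mapsto -s$, which turns any past-directed null geodesic through $p$ into a future-directed one through $p$ with the same image; the previous step then places $q$ in that common image. For the $Z^{(x,y)}$ case, only future-directed null geodesics need to meet $\im(\gamma)$, and the hypothesis provides $t > 0$ with $\beta(t) = y$, so $\alpha(t) = (y,t) = \gamma(t) \in \im(\gamma)$. Finally, I would note that $\gamma(t) = (y,t)$ has tangent $\partial_t$, which is timelike and future-pointing for the chosen time orientation, so $\gamma$ is a valid timelike observer curve.

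There is no real obstacle here; the only mild subtlety is keeping the parametrization consistent between ``geodesic through $p$'' (no preferred basepoint) and ``geodesic starting at $p$'' (basepoint at $s = 0$), which is handled by the affine reparametrization freedom and, for strong refocusing, the time-reversal symmetry of the null condition.
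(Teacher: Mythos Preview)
Your proposal is correct and follows essentially the same approach as the paper: both reduce to the observation that, up to orientation-preserving affine reparametrization, every future-oriented null geodesic through $(x,0)$ in $(X,g)$ has the form $\alpha(s)=(\beta(s),s)$ with $\beta$ a unit-speed geodesic in $(M,h)$ starting at $x$. You simply spell out in more detail what the paper compresses into a single sentence, including the explicit handling of past-directed null geodesics via time reversal and the verification that $\gamma$ is timelike.
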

\begin{proof}
The results follow immediately after realizing that, up to orientation-preserving, affine reparametrization, any future-oriented null-geodesic $\alpha$ in $(X,g)$ through $(x,0)$ is given by $\alpha(t)=(\beta(t),t)$ where $\beta$ is a unit-speed geodesic through $x$ in $(M,h)$. 
\end{proof}

By proving theorems about the topology of Cauchy surfaces of globally hyperbolic observer-refocusing (resp. strongly refocusing) spacetimes, we will recover theorems about the topology of $Z^{(x,y)}$ (resp. $Y^{(x,y)}_l$) manifolds as a special case. To pass from statements about observer-refocusing spacetimes to statements about $Z^{(x,y)}$ manifolds, we will consistently consider the spacetime $(X,g)$ associated to a Riemannian $Z^{(x,y)}$ manifold $(M,h)$ and use observers of type $\gamma:(a,b)\to X=M\times \R$, $\gamma(t)=(y,t)$, so-called `standard observers' in the spacetime associated to the Riemannian manifold.

\section{Main Results}
We briefly state the main results of this paper.

\begin{result}[\theoref{thm:MainResult}]
Let $(X,g)$ be a globally hyperbolic spacetime of dimension at least $3$, let $p\in X$, let $L$ be some lightsphere at $p$ (see \defref{def:lightsphere}), let $\{\gamma_i\}_{i\in I}$ be some countable family of timelike curves and let $T>0$. If for each future-oriented null-geodesic $\alpha$ with $\alpha(0)=p$ and $\alpha'(0) \in L$ there is some $t \in (0,T]$ and $i \in I$ such that $\alpha(t) \in \im(\gamma_i)$, then $(X,g)$ has a compact Cauchy surface and finite fundamental group.
\end{result}

\begin{result}[\corolref{cor:SomeObserverRefocusingSpacetimesHaveCompactCauchySurfaceWithFiniteFundamentalGroup}]
Let $(X,g)$ be a globally hyperbolic spacetime of dimension at least $3$ which is observer-refocusing with respect to a point $p\in X$ and a timelike curve $\gamma$ such that the domain of $\gamma$ is compact. Then $(X,g)$ has a compact Cauchy surface and finite fundamental group.
\end{result}

\begin{result}[\corolref{cor:SomeZxManifoldsAreCompactWithFiniteFundamentalGroup}]
Let $(M,h)$ be a $Z^{(x,y)}$ manifold of dimension at least $2$ such that all unit-speed geodesics starting at $x$ reach $y$ in uniformly bounded time. Then $M$ is compact with finite fundamental group. 
\end{result}

\begin{result}[\theoref{thm:AnalyticMetricImpliesObserverRefocusingIsStronglyRefocusing}]
Let $(X,g)$ be a globally hyperbolic, observer-refocusing spacetime of dimension at least $3$ with an analytic metric $g$. Then $(X,g)$ is strongly refocusing.
\end{result}

\begin{result}[\corolref{cor:AnalyticMetricImpliesZxManifoldsAreYxl}, \theoref{thm:AppendixVersionAnalyticMetricImpliesZxManifoldsAreYxl}]\label{result:AnalyticMetricImpliesZxManifoldsAreYxl}
Let $(M,h)$ be a $Z^{(x,y)}$ manifold of dimension at least $2$ with an analytic metric $h$. Then $(M,h)$ is a $Y^{(x,y)}_l$ manifold for some $l>0$. 
\end{result}

Result \ref{result:AnalyticMetricImpliesZxManifoldsAreYxl} is proven once as a corollary of the Lorentzian result \theoref{thm:AnalyticMetricImpliesObserverRefocusingIsStronglyRefocusing}, and, because it might be of independent interest to Riemannian geometers less familiar with Lorentzian geometry, proven once more without using Lorentzian techniques in the appendix.

\section{Related results about $Y^x_l$ manifolds and refocusing spacetimes}

We first review results about the topology of $Y^x_l$ manifolds and refocusing spacetimes. $Y^x_l$ manifolds, as well as $Z^x$ manifolds and a whole family of related notions, are discussed in \cite[Definitions 7.7]{Besse}. The following result is due to Bott \cite{Bott1954} and Samelson \cite{Samelson1963}.

\begin{thm}[Bott-Samelson]
Let $(M,h)$ be a complete Riemannian manifold of dimension at least $2$, $x \in M$ and $l>0$ such that all unit-speed geodesics $\gamma$ with $\gamma(0)=x$ are periodic with period $l$. Then $\pi_1(M)$ is finite and the integral cohomology ring of $M$ is generated by one element. 
\end{thm}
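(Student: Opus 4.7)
The plan is to prove the two conclusions separately: finiteness of $\pi_1(M)$ by a covering-space argument, and the cohomology ring structure via Morse--Bott theory on the based path space.

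First, I would observe that $M$ is compact: every point of $M$ lies on a unit-speed geodesic from $x$ of length at most $l$, so $M = \exp_x(\overline{B_l(0)})$ is the continuous image of a compact set. For finiteness of $\pi_1(M)$, I would lift to the universal cover $\pi \colon \tilde M \to M$ with a chosen basepoint $\tilde x \in \pi^{-1}(x)$. For each unit vector $v \in T_x M \cong T_{\tilde x}\tilde M$, the lift $\tilde\gamma_v$ of $\gamma_v$ satisfies $\tilde\gamma_v(l)\in\pi^{-1}(x)$, and the map $S^{n-1}\to\pi^{-1}(x)$, $v\mapsto\tilde\gamma_v(l)$, is continuous from a connected space (this uses $n\ge 2$) to a discrete one, hence constant. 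Let $\tilde y$ be its value; then $\tilde M$ is a $Y^{(\tilde x,\tilde y)}_l$ manifold, and in particular every unit-speed geodesic in $\tilde M$ from $\tilde x$ is a closed loop at $\tilde x$ of length $2l$, so by the same compactness argument $\tilde M$ is compact. A compact universal cover forces $\pi_1(M)$ to be finite.

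For the cohomology, I would apply Morse--Bott theory to the energy functional on the based path space $\Omega_{x,y}M$ for a suitable choice of $y$ (either $y=x$, working on the loop space, or a point $y$ not conjugate to $x$ along any relevant geodesic, to obtain clean nondegeneracy of the critical manifolds). The critical points of the energy are constant-speed geodesics from $x$ to $y$, and by the $Y^x_l$ hypothesis, for each positive integer $k$ the critical set at energy level $\sim (kl)^2$ is diffeomorphic to $S^{n-1}$, parametrized by the initial unit tangent vector. The Morse--Bott indices are then computed from the Morse index theorem applied to a once-iterated geodesic, yielding an arithmetic progression whose common difference depends on $n$ and the conjugate-point structure. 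This determines $H_*(\Omega_x M)$, and combining this additive information with the Serre spectral sequence for the path--loop fibration $\Omega_x M\to P_x M\to M$, whose total space is contractible, forces $H^*(M)$ to have the form of a truncated polynomial ring on a single generator.

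The main obstacle is twofold: first, verifying Morse--Bott nondegeneracy along each critical sphere, which requires a careful analysis of the second variation of energy and of the Jacobi fields along an iterated geodesic; and second, upgrading the additive Morse--Bott output to the multiplicative ring structure on $H^*(M)$, which is the heart of the Bott--Samelson argument and requires a multiplicative comparison through the Serre spectral sequence (with nontrivial differentials forced by the sparse Poincar\'e series of $\Omega_x M$). The covering-space argument for $\pi_1(M)$ is by contrast essentially elementary once one observes that the $Y^x_l$ condition lifts.
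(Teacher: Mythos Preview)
The paper does not give its own proof of this theorem: it is quoted in Section~3 as a classical result due to Bott and Samelson, with references to \cite{Bott1954} and \cite{Samelson1963}, and is used only as background for the later B\'erard-Bergery theorem and its Lorentzian analogues. So there is no ``paper's proof'' to compare against.

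That said, your outline is faithful to the classical argument. The covering-space step for $\pi_1$ is correct and essentially self-contained; note that you are implicitly invoking the fact (stated in the paper's definition of $Y^{(x,y)}_l$ manifolds, with a reference to \cite{ChernovRudyak2008}) that $Y^{(\tilde x,\tilde y)}_l$ implies $Y^{\tilde x}_{2l}$, which is not entirely obvious and deserves a sentence of justification if you want the argument to stand alone. For the cohomology ring, your sketch correctly identifies Morse--Bott theory on $\Omega_x M$ and the path--loop Serre spectral sequence as the two ingredients, and you are honest that the multiplicative step is where the real work lies. One technical point worth flagging: the critical spheres of the energy functional are not Morse--Bott nondegenerate in general without a perturbation or a careful choice of endpoint, and Bott's original paper handles this by working with a nearby non-conjugate basepoint and analyzing how the index jumps; your parenthetical ``either $y=x$ \ldots\ or a point $y$ not conjugate to $x$'' gestures at this, but the two options are not interchangeable and the second is the one that actually yields clean Morse theory.
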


In \cite{BerardBergery1977} Bérard-Bergery proves a similar statement only requiring the geodesics to revisit $x$ after a common time $l$, removing the requirement that the geodesics return with the same velocity they started with.

\begin{thm}[Bérard-Bergery]
Let $(M,h)$ be a $Y^x_l$ manifold of dimension at least $2$. Then $M$ is compact, has finite fundamental group and the rational cohomology ring of $M$ is generated by one element. 
\end{thm}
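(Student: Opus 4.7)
The plan is to establish the three conclusions---compactness, finiteness of $\pi_1(M)$, and the single-generator property of $H^*(M;\Q)$---in order of increasing difficulty, with the last being the principal obstacle. For compactness, the $Y^x_l$ condition forces every $y \in M$ to satisfy $d(x,y) \le l$. Indeed, by Hopf--Rinow there is a unit-speed minimizing geodesic $\gamma$ from $x$ to $y$ of length $d(x,y)$; if $d(x,y) > l$, then since $\gamma(l) = x$ by hypothesis, the restriction $\gamma|_{[l,\,d(x,y)]}$ is a strictly shorter path from $x$ to $y$, contradicting minimality. Hence $M$ is contained in the closed ball of radius $l$ about $x$, and Hopf--Rinow again yields compactness.

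For the fundamental group, let $\pi : \tilde{M} \to M$ be the Riemannian universal cover and fix $\tilde{x} \in \pi^{-1}(x)$. Geodesics in $\tilde{M}$ based at $\tilde{x}$ project to geodesics from $x$ in $M$, so at time $l$ they land in the discrete fiber $\pi^{-1}(x)$. The map $S_{\tilde{x}}\tilde{M} \to \pi^{-1}(x)$ given by $v \mapsto \exp_{\tilde{x}}(lv)$ is continuous with connected domain $S^{n-1}$ and discrete codomain, so its image is a single point $\tilde{y}$. Thus $\tilde{M}$ is a $Y^{(\tilde{x},\tilde{y})}_l$ manifold, hence a $Y^{\tilde{x}}_{2l}$ manifold by the remark in the definition, and the diameter argument (with $l$ replaced by $2l$) shows $\tilde{M}$ is compact. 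A compact universal cover forces $\pi_1(M)$ to be finite.

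For the rational cohomology---the hard part---the essential geometric input is the continuous map $\Phi : S^{n-1} \to \Omega_x M$, $\Phi(v)(t) = \exp_x(l t v)$ for $t \in [0,1]$, which assembles every geodesic loop at $x$ into a single sphere-parametrized family sitting at the common energy level $l^2/2$. After passing to a finite cover to make $M$ simply connected (permitted by the previous step), the plan is to apply Morse--Bott theory to the energy functional on $\Omega_x M$: the critical submanifolds are $\Phi(S^{n-1})$ together with its reparametrized iterates, and controlling the indices along them yields the rational Poincar\'e series of $\Omega_x M$. Comparing this, via the path-loop fibration $\Omega_x M \to P_x M \to M$ with $P_x M$ contractible and the Leray--Serre spectral sequence over $\Q$, against the Poincar\'e series forced by any candidate rational cohomology ring of $M$ shows the two can only agree when $H^*(M;\Q)$ has a single generator. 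The technical heart is the index computation along the iterated critical manifolds, which is what makes the argument substantially harder than the Bott--Samelson case where the geodesics are periodic and iterate in a more straightforward manner.
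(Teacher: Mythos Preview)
The paper does not prove this theorem; it is stated in Section~3 as a known result attributed to B\'erard-Bergery \cite{BerardBergery1977}, with no argument given. So there is no in-paper proof to compare against.

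That said, your sketch tracks the classical route. The compactness and finite-$\pi_1$ arguments are correct and complete: the diameter bound via a minimizing geodesic, and the connectedness-of-$S^{n-1}$ trick to show the universal cover is itself a $Y^{(\tilde x,\tilde y)}_l$ (hence $Y^{\tilde x}_{2l}$) manifold, are exactly the standard reductions. For the cohomology part you have correctly identified the architecture of B\'erard-Bergery's proof---Morse--Bott theory on $\Omega_x M$ applied to the energy functional, with the geodesic $(n{-}1)$-spheres and their iterates as critical manifolds, then comparison through the path--loop fibration---but what you have written is a plan, not a proof. The actual content lies in the index growth estimates for iterated non-periodic closed geodesics (this is where the $Y^x_l$ case genuinely diverges from Bott--Samelson) and in the algebraic step showing that the resulting Poincar\'e series of $\Omega_x M$ forces $H^*(M;\Q)$ to be singly generated; neither is carried out here. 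If you intend this as a self-contained proof rather than a citation, those two pieces need to be filled in or referenced precisely (e.g., to \cite{BerardBergery1977} or \cite[Chapter~7]{Besse}).
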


By recent contact Bott-Samelson Theorem results of Frauenfelder, Labrousse and Schlenk \cite{FrauenfelderLabrousseSchlenk2015}, the conclusion of the Bérard-Bergery theorem can be strengthened to be exactly the same as the one in the Bott-Samelson Theorem.

On the Lorentzian side, in \cite{Low2006}, Low defines a weaker version of a strongly refocusing spacetime.

\begin{defin}[refocusing spacetime]
Let $(X,g)$ be a Lorentzian manifold. We say that $(X,g)$ is \textit{refocusing} if there exists some $p \in X$ and some open set $U \ni p$ such that for all $V$ with $p\in V \subset U$ there is some $q \in X\setminus V$ such that all null-geodesics through $q$ enter $V$. 
\end{defin}

Low proved that refocusing globally hyperbolic spacetimes must have a compact Cauchy surface. In \cite[Theorem 11.5]{ChernovRudyak2008} Chernov and Rudyak prove that globally hyperbolic refocusing spacetimes of dimension at least $3$ have a globally hyperbolic semi-Riemannian universal cover which is also refocusing. They use this to prove the following result.

\begin{thm}
Every refocusing, globally hyperbolic spacetime of dimension at least $3$ has a compact Cauchy surface and finite fundamental group.
\end{thm}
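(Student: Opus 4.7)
The plan is to apply Low's theorem (that globally hyperbolic refocusing spacetimes have compact Cauchy surface) twice: first to $(X,g)$ directly, producing a compact Cauchy surface $S$, and second to the universal cover $(\tilde X, \pi^*g)$, producing a compact Cauchy surface $\tilde S$. Since $X$ is diffeomorphic to $S \times \R$, we have $\pi_1(X) = \pi_1(S)$, so if $\tilde X$ has compact Cauchy surface $\tilde S$ then the restriction of the covering map $\pi: \tilde X \to X$ to $\tilde S$ is a covering between compact manifolds, hence of finite degree, which forces $\pi_1(X)$ to be finite. Thus the proof reduces to showing that the universal cover of a globally hyperbolic refocusing spacetime is again globally hyperbolic and refocusing.

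Lifting global hyperbolicity is standard: pulling back the metric and time-orientation makes $\tilde X$ a Lorentzian spacetime, and a smooth Cauchy temporal function $\mathcal{T}: X \to \R$ produced by Bernal--S\'anchez pulls back to a smooth temporal function $\mathcal{T} \circ \pi$ on $\tilde X$ whose spacelike level sets are Cauchy surfaces, which is verified by path-lifting inextendible causal curves from $X$. A connected component of $\pi^{-1}(S)$ then serves as $\tilde S$.

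The main obstacle is to show that $(\tilde X, \pi^*g)$ is again refocusing. Let $p \in X$ and an open neighborhood $U \ni p$ witness refocusing in $X$. Fix a lift $\tilde p \in \tilde X$ and an open $\tilde U \ni \tilde p$ on which $\pi$ restricts to a diffeomorphism, shrunk so that its translates under the deck group $\Gamma = \pi_1(X)$ are pairwise disjoint. For any open $\tilde V$ with $\tilde p \in \tilde V \subset \tilde U$, set $V = \pi(\tilde V)$. Refocusing of $X$ gives $y \in X \setminus V$ such that every null-geodesic through $y$ enters $V$. Lift $y$ to some $\tilde y \in \pi^{-1}(y)$; then every null-geodesic $\alpha$ through $\tilde y$ has $\pi \circ \alpha$ entering $V$, so $\alpha$ itself enters some translate $g_\alpha \cdot \tilde V$ with $g_\alpha \in \Gamma$ (taking first entry to make the assignment well-defined). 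Parametrizing unparametrized null-geodesics through $\tilde y$ by their direction in the projectivized light cone, which is homeomorphic to $\R P^{n-2}$ and connected for $n = \dim X \ge 3$, smooth dependence on initial data and disjointness of the translates $g \cdot \tilde V$ make the map $\alpha \mapsto g_\alpha$ locally constant (after, if necessary, perturbing $V$ slightly so that null-geodesics meeting $\partial V$ do so transversally, guaranteeing continuity of the first-entry point). Connectedness forces $g_\alpha$ to be a constant $g_0$, and replacing $\tilde y$ by $g_0^{-1} \cdot \tilde y$ we obtain a point through which every null-geodesic enters $\tilde V$; that $\tilde y \notin \tilde V$ follows from $y \notin V$ together with the injectivity of $\pi|_{\tilde U}$.

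The role of the dimension hypothesis is precisely to guarantee connectedness of $\R P^{n-2}$; in dimension $2$ the projectivized light cone has two components, corresponding to the two null directions, and one cannot force both to refocus through the same deck translate. The delicate technical point to be checked is the continuity of the first-entry assignment $\alpha \mapsto g_\alpha$ in the presence of potential tangential contact of null-geodesics with $\partial V$, which one resolves either by exploiting the flexibility in choosing $V$ (the refocusing definition quantifies over all sufficiently small neighborhoods) or by passing to a transverse perturbation and arguing by limits.
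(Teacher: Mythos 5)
You have correctly reconstructed the architecture of the proof that the paper relies on: the paper does not prove this theorem itself, but cites Low for compactness of the Cauchy surface and Chernov--Rudyak (Theorem 11.5) for the key lemma that the universal semi-Riemannian cover of a globally hyperbolic refocusing spacetime of dimension at least $3$ is again globally hyperbolic and refocusing, after which finiteness of $\pi_1$ follows from the compact covering $\tilde S \to S$ exactly as you say. Your lifting of global hyperbolicity and your covering-degree argument are fine. The substance you must supply is therefore the lemma that refocusing passes to the cover, and it is precisely there that your argument has a genuine gap.

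The gap is the local constancy of the assignment $\alpha \mapsto g_\alpha$. A null geodesic through $y$ may meet $V$ several times and in both time directions, and upstairs these visits can lie in \emph{different} deck translates of $\tilde V$; consequently any ``first entry'' selection can jump between translates, both through tangential encounters with $\partial V$ and through a swap between the first future entry and the first past entry as the null direction varies (refocusing only guarantees that the inextendible geodesic enters $V$ somewhere, possibly only to the past of $y$, so you cannot fix a time direction). Your proposed repair by perturbing $V$ to achieve transversality is circular: the point $y$ is produced by the refocusing property only \emph{after} $V$ is chosen, so you cannot arrange transversality of $\partial V$ with the null geodesics through the $y$ you will actually obtain; and even granting transversality, the future/past ambiguity persists, so connectedness of the space of null directions does not force a single deck element (openness of the sets $\{v : \alpha_v \text{ enters } g\tilde V\}$ alone gives only a countable open cover of the sphere, not constancy). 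It is also a warning sign that your argument for this step nowhere uses global hyperbolicity. The standard fix --- and the mechanism in the cited Chernov--Rudyak proof --- is strong causality: shrink to a causally convex, evenly covered $V' \subset V$ with $p \in V'$; then every null geodesic meets $V'$ in a single connected parameter interval, so its lift lies in exactly one translate $g\tilde V'$, the assignment $v \mapsto g_v$ is canonically defined with no ``first entry'' choice, and local constancy is immediate from continuous dependence of geodesics on initial data (if $\alpha_v(t) \in g\tilde V'$ then $\alpha_{v'}(t) \in g\tilde V'$ for nearby $v'$). With that replacement (plus the small bookkeeping needed to ensure the resulting $\tilde y$ lies outside $\tilde V$), your connectedness argument for $\dim X \geq 3$ does go through; as a minor point, the space of null lines at a point is a sphere $S^{n-2}$ rather than $\R P^{n-2}$, which does not affect connectedness for $n \geq 3$.
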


In \cite{ChernovKinlawSadykov2010} Chernov, Kinlaw and Sadykov define a Riemannian analogue of refocusing spacetimes. 

\begin{defin}[$\tilde{Y}^x$ manifolds]
Let $(M,h)$ be a complete Riemannian manifold and let $x\in M$. We say that $M$ is a $\tilde{Y}^x$ manifold if there exists some $\bar{\varepsilon}>0$ such that for all $\varepsilon$ with $0<\varepsilon<\bar{\varepsilon}$ there is some $l>\varepsilon$ and some $y$ such that for all unit-speed geodesics $\gamma$ starting at $y$ we have $d(x,\gamma(l)) < \varepsilon$.
\end{defin}

\begin{prop}
If $(M,h)$ is a $\tilde{Y}^x$ manifold, then the associated spacetime $(X,g)$ is refocusing.
\end{prop}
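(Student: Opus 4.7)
The plan is to take $p=(x,0)\in X$ as the candidate refocusing point and, reading $l$ from the $\tilde{Y}^x_l$ definition as a fixed parameter (as the subscript notation in the name suggests), to choose $U=M\times(-l,l)$ as the witnessing open neighborhood. Recall from the proof of the preceding proposition that every null-geodesic in $(X,g)=(M\times\R,\,h\oplus -dt^2)$ through $(z,t_0)$ has image $\{(\gamma_v(s),t_0+s):s\in\R\}$ for some unit vector $v\in S_zM$, where $\gamma_v$ is the unit-speed geodesic in $(M,h)$ starting at $z$ with initial tangent $v$; so the (unoriented) null-geodesics through $(z,t_0)$ are parameterized by $v\in S_zM$.

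\textbf{Construction and verification.} For any open $V$ with $p\in V\subset U$, openness and the product structure of $X$ yield some $\varepsilon_V\in(0,\min(l,\bar\varepsilon))$ with $B_{\varepsilon_V}(x)\times\{0\}\subset V$. Apply the $\tilde{Y}^x_l$ hypothesis at scale $\varepsilon=\varepsilon_V$ to obtain $y\in M$ such that $d(x,\gamma_v(l))<\varepsilon_V$ for every unit $v\in S_yM$, and set $q:=(y,-l)$. For each $v\in S_yM$, the null-geodesic through $q$ parameterized by $v$ has image $\{(\gamma_v(s),-l+s):s\in\R\}$ and at $s=l$ passes through $(\gamma_v(l),0)\in B_{\varepsilon_V}(x)\times\{0\}\subset V$, so every null-geodesic through $q$ enters $V$. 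Furthermore $q=(y,-l)\notin U=M\times(-l,l)$ because $-l\notin(-l,l)$, hence $q\notin V$ as required.

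\textbf{Main obstacle.} The genuinely delicate point is how to read the $\tilde{Y}^x_l$ definition. If (as the subscript in the name suggests) $l$ is a fixed parameter and only $y$ varies with $\varepsilon$, the argument above closes immediately. If instead one reads $l$ as varying with $\varepsilon$ (as the grammatical ``there is some $l>\varepsilon$ and some $y$'' could suggest), then $l$ may shrink to zero as $\varepsilon\to 0$, the proposed $q=(y,-l)$ can be arbitrarily close to $p$, and additional work is needed to ensure $q\notin V$ --- most naturally by exploiting the margin $l-\varepsilon>0$ to perturb $q\mapsto(y,-l-\lambda)$ in the time direction into $X\setminus V$, using the unit-speed bound $d(\gamma_v(l+\lambda),\gamma_v(l))\le\lambda$ to preserve the refocusing for $\lambda$ below a threshold set by the size of a spatial ball in $V\cap(M\times\{0\})$. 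In either reading the Riemannian-to-Lorentzian dictionary of the associated spacetime is immediate from the earlier proposition, and the only substantive step is this bookkeeping balancing $l$, $\varepsilon_V$, and the shape of $V$.
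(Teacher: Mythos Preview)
The paper does not supply its own proof of this proposition; it is quoted as a result of Chernov--Kinlaw--Sadykov. So there is nothing to compare against, and I evaluate your proposal on its own merits.

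Your reading of the definition matters, and the paper's definition is unambiguous despite the stray subscript: the quantifier order is ``$\exists\,\bar\varepsilon\ \forall\,\varepsilon<\bar\varepsilon\ \exists\,l>\varepsilon\ \exists\,y$'', so $l$ genuinely varies with $\varepsilon$. Under that reading your main argument does not apply, and your proposed perturbation fix does not close the gap. The difficulty is exactly the one you flag: you want $\lambda$ small enough that $(\gamma_v(l+\lambda),0)$ still lies in a spatial ball contained in $V\cap(M\times\{0\})$, which bounds $\lambda$ by a quantity comparable to $\delta$ (the spatial radius of that ball); but to force $(y,-l-\lambda)\notin V$ you may need $\lambda$ as large as the \emph{time}-extent of $V$, which can be as large as the time-extent of $U$ and is in no way controlled by $\delta$. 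Since $V$ is an arbitrary open set with $p\in V\subset U$, these two constraints on $\lambda$ are incompatible in general, and invoking ``the margin $l-\varepsilon>0$'' does not help because that margin can be arbitrarily small.

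The missing observation is that $l$ is in fact bounded below by a positive constant independent of $\varepsilon$. Indeed, if $d(x,\exp_y(lv))<\varepsilon$ for every unit $v$, then for any fixed $v$ the points $\exp_y(lv)$ and $\exp_y(-lv)$ both lie in $B_\varepsilon(x)$, so their distance is less than $2\varepsilon<2l$; but if $2l$ is below the injectivity radius at $y$ that distance equals $2l$, a contradiction. Since $d(x,y)<\varepsilon+l<2l$, a hypothetical sequence $l_n\to 0$ forces $y_n\to x$, along which the injectivity radius stays bounded below by a positive constant, yielding the contradiction. Hence there is $l_0>0$ with $l\ge l_0$ for every admissible triple $(\varepsilon,l,y)$. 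With this in hand your ``fixed $l$'' argument goes through verbatim: take $U=M\times(-l_0,l_0)$; for any $V$ with $p\in V\subset U$ choose $\varepsilon_V$ with $B_{\varepsilon_V}(x)\times\{0\}\subset V$ and $\varepsilon_V<\bar\varepsilon$, obtain $(l,y)$ from the hypothesis, and set $q=(y,-l)$. Then $q\notin U\supset V$ since $l\ge l_0$, and every null-geodesic through $q$ meets $B_{\varepsilon_V}(x)\times\{0\}\subset V$.
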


The authors combine these results to show that every $\tilde{Y}^x$ manifold of dimension at least $2$ is compact with finite fundamental group. 

\begin{rem}
Just as it is unknown whether there are $Z^x$ manifolds that are not $Y^x_l$ manifolds \cite[Question 7.70]{Besse}, it is unclear whether there are $\tilde{Y}^x$ manifolds that are not $Y^x_l$ manifolds, whether there are refocusing spacetimes that are not strongly refocusing, and whether there are observer-refocusing spacetimes that are not strongly refocusing.
\end{rem}

It is tempting to define some notion of $\tilde{Z}^x$ manifold and a corresponding notion for spacetimes.

\begin{defin}[$\tilde{Z}^x$ manifolds]\label{def:tildeZx}
Let $(M,h)$ be a complete Riemannian manifold and let $x\in M$. We say that $(M,h)$ is a \textit{$\tilde{Z}^x$ manifold} if there exists some $\overline{\varepsilon}>0$ such that for all $\varepsilon$ with $0<\varepsilon<\overline{\varepsilon}$ and all unit-speed geodesics $\gamma$ starting at $x$ there is some $t>\varepsilon$ with $d(x,\gamma(t))<\varepsilon$. 
\end{defin}

\begin{defin}[very weakly refocusing spacetime]
Let $(X,g)$ be a spacetime and $p\in X$. We say that $(X,g)$ is \textit{very weakly refocusing} if every future-pointing null-geodesic starting at $p$ eventually enters the timelike future $I^+(p)$.
\end{defin}

Every $Z^x$ manifold is a $\tilde{Z}^x$ manifold, but the converse is false. For example, any $n$-torus $T^n$ with standard metric will be a $\tilde{Z}^x$ manifold, but only $T^1$ is a $Z^x$ manifold. We will show in \propref{prop:ObserverRefocusingWithGammaThroughPHasCompactCauchySurface} that globally hyperbolic very weakly refocusing spacetimes have a compact Cauchy surface. In fact every proof of compactness of Cauchy surfaces in this paper will ultimately rely on this fact. 

\begin{prop}
Let $(M,h)$ be a $\tilde{Z}^x$ manifold. Then the associated spacetime $(X,g)$ is very weakly refocusing. 
\end{prop}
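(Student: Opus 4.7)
The plan is to unpack the product structure of the associated spacetime and translate the $\tilde{Z}^x$ hypothesis directly into the assertion that every future-directed null-geodesic from $p=(x,0)$ eventually lands in $I^+(p)$. The proof should essentially be a one-shot application of the defining property, with the only substantive input being an explicit description of $I^+(p)$ in the warped product.

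First I would recall, as noted in the earlier proposition about $Y^{(x,y)}_l$ and $Z^{(x,y)}$ manifolds, that every future-oriented null-geodesic in $(X,g)=(M\times\R,\,h\oplus-dt^2)$ starting at $p=(x,0)$ is, up to orientation-preserving affine reparametrization, of the form $\alpha(t)=(\beta(t),t)$, where $\beta$ is a unit-speed geodesic in $(M,h)$ with $\beta(0)=x$. This is the standard computation: a null vector $(v,a)\in T_{(x,0)}X$ satisfies $h(v,v)=a^{2}$, and future-orientation plus rescaling gives $a=1$ and $|v|_{h}=1$, so the geodesic equations in the product decouple into the geodesic equation on $M$ and uniform motion in $t$.

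Next I would characterize the chronological future of $p$ in the product spacetime: $(y,s)\in I^{+}(p)$ if and only if $s>d_{h}(x,y)$. One direction: given a future-directed timelike curve $\sigma(r)=(\mu(r),\tau(r))$ from $(x,0)$ to $(y,s)$, the timelike condition $|\mu'(r)|_{h}<\tau'(r)$ integrates to $\mathrm{length}(\mu)<s$, so $d_{h}(x,y)<s$. Conversely, if $s>d_{h}(x,y)$, pick a smooth curve $\mu:[0,1]\to M$ from $x$ to $y$ of length $L<s$, reparametrize by constant speed $L$, and set $\sigma(r)=(\mu(r),rs)$; then $|\mu'|_{h}=L<s=|\tau'|$, so $\sigma$ is a future-directed timelike curve from $p$ to $(y,s)$.

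Finally, the application of the hypothesis: let $\bar{\varepsilon}>0$ be as in \defref{def:tildeZx} and fix any $\varepsilon\in(0,\bar{\varepsilon})$. Given any future-pointing null-geodesic $\alpha$ starting at $p$, write it (after reparametrization) as $\alpha(t)=(\beta(t),t)$ with $\beta$ a unit-speed geodesic in $(M,h)$ from $x$. The $\tilde{Z}^x$ property produces some $t>\varepsilon$ with $d_{h}(x,\beta(t))<\varepsilon<t$, and by the characterization above this forces $\alpha(t)\in I^{+}(p)$. Since $I^{+}(p)$ is open and future-directed causal curves that enter it remain in the chronological future afterwards (using $I^{+}\circ J^{+}\subseteq I^{+}$ in any spacetime), $\alpha$ has entered $I^{+}(p)$ and remains in it.

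The proof has essentially no obstacle: the only thing one might pause over is the mild parametrization bookkeeping, namely that the $t$ appearing in the $\tilde{Z}^x$ definition is the arclength parameter of $\beta$ and simultaneously the natural parameter of the null geodesic $\alpha$ in the product. That is exactly why the synchronization in $(\beta(t),t)$ gives the clean inequality $d_{h}(x,\beta(t))<t$, and therefore the conclusion is immediate.
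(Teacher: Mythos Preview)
Your proof is correct and follows essentially the same approach as the paper: both parametrize the null-geodesic as $\alpha(t)=(\beta(t),t)$, invoke the $\tilde{Z}^x$ hypothesis to get some $t>\varepsilon$ with $d_h(x,\beta(t))<\varepsilon<t$, and conclude $\alpha(t)\in I^+(p)$. The only cosmetic difference is that you first record the general characterization $(y,s)\in I^+(p)\iff s>d_h(x,y)$ and then apply it, whereas the paper builds an explicit timelike curve from $p$ to $(\beta(T),\varepsilon)$ (via a minimizing geodesic $\tilde\beta$ in $M$) and then a vertical segment up to $\alpha(T)$; this is exactly the construction underlying your ``converse'' direction, so the arguments coincide.
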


\begin{proof}
Let $\alpha$ be a future-pointing null-geodesic with $\alpha(0)=(x,0)\eqqcolon p$. We can parametrize $\alpha$ so that $\alpha(t)=(\beta(t),t)$ for some unit-speed geodesic $\beta$ in $M$ with $\beta(0)=x$. Since $(M,h)$ is a $\tilde{Z}^x$ manifold, there is some $\varepsilon>0$ and some $T>\varepsilon$ such that $d(x,\beta(T))<\varepsilon$. We can assume that $d(x,\beta(T))\neq 0$ else increase $T$ slightly. Let $\tilde{\beta}$ be a unit-speed geodesic with $\tilde{\beta}(0)=x$ and $\tilde{\beta}(d(x,\beta(T)))=\beta(T)$. Consider the curve $\gamma$ in $X$ defined via $\gamma(t) = (\tilde{\beta}(t), \frac{\varepsilon}{d(x,\beta(T))} t)$. It is easy to see that $\gamma$ is a timelike curve with $\gamma(0)=p$ and $\gamma(d(x,\beta(T)))=(\beta(T),\varepsilon)$. Since $T>\varepsilon$ we have that $\alpha(T)=(\beta(T),T)$ is in the timelike future of $(\beta(T),\varepsilon)$, which is in the timelike future of $p=\alpha(0)=\gamma(0)$. This shows that $\alpha$ eventually enters $I^+(p)$. Since this is true for all future-pointing null-geodesics $\alpha$ starting at $p$, we conclude that $(X,g)$ is very weakly refocusing.
\end{proof}

\begin{rem}
We will prove in \corolref{cor:EveryTildeZxManifoldIsCompact} that every $\tilde{Z}^x$ manifold is compact. Further, taking any compact Riemannian manifold $(M,h)$ and any $x\in M$, the Poincaré recurrence theorem guarantees that almost all unit-speed geodesics starting at $x$ will exhibit the recurring behavior described in \defref{def:tildeZx}. It seems likely that every compact manifold $M$ can be equipped with some Riemannian metric such that there is some $x\in M$ making it into a $\tilde{Z}^x$ manifold. Maybe this is even true for a generic Riemannian metric on $M$. 
\end{rem}

In \cite{Low2006}, Low showed that the space of future-directed, unparameterized, maximal null-geodesics $\mathcal{N}$ of a strongly causal spacetime $(X,g)$ is a smooth manifold with a natural contact structure. For globally hyperbolic manifolds, there is a natural contactomorphism from $\mathcal{N}$ to $S^{\ast}M$ for any spacelike Cauchy surface $M\subset X$ given by the following map: To each future-directed null-geodesic $\alpha$ in $X$ there is a unique point $p\in M$ and $t\in \R$ with $\alpha(t)=p$. Let $\pi:X=M\times \R \to M$ be the natural projection. Consider the vector $v\in T_pM$ defined through $v=d\pi(\alpha'(t))$. Orientation-preserving affine reparametrization will change $v$ by multiplication with a positive scalar. Let $[\alpha]$ denote the equivalence class of null-geodesics up to reparametrization and let $[v]$ denote the equivalence class of vectors in $TM$ up to multiplication by a positive scalar. Then the map $\mathcal{N}\to SM$, $[\alpha]\mapsto [v]$ is well-defined. It is easy to check that this map is a diffeomorphism. Since $g$ restricts to a Riemannian metric on $M$, we can identify $SM$ with $S^{\ast}M$ through $g|_M$. This gives us the contactomorphism $\mathcal{N}\to S^{\ast}M$. For each $p\in X$, the set $\sky_p \coloneqq \{[\alpha] \in \mathcal{N} \mid \alpha \text{ is a null-geodesic through } p\}$ is called the \textit{sky} of $p$. Each sky is a Legendrian submanifold of $\mathcal{N}$, and if $M$ is any spacelike Cauchy surface through $p$ then $\sky_p$ corresponds to a fiber of $S^{\ast}M$ when identifying $\mathcal{N}\simeq S^{\ast}M$.

\begin{prop}\label{prop:NullGeodesicFlowInducesPositiveLegendrianIsotopy}
Let $(X,g)$ be a globally hyperbolic spacetime. Then $(X,g)$ splits as $X=M\times \R$ with metric $g=h - A dt^2$, as in \remref{rem:SplittingWithNoCrossTerms}. For $t\in \R$, let $h_t$ denote the Riemannian metric on $M\simeq M \times \{t\}$ given by $h_t = h|_{M\times \{t\}}$ and let $\alpha_t$ be the contact form on $(S^{\ast}M,\xi)$ induced by $h_t$. The co-orientations induced by each $\alpha_t$ agree. For each $t\in \R$, let $\phi_t:\mathcal{N}\to S^{\ast}(M\times \{t\})\simeq S^{\ast}M $ be the natural contactomorphism. Let $p\in X$. Then $(\phi_t(\sky_p))_{t\in \R}$ is a positive Legendrian isotopy in $S^{\ast}M$ (with respect to the co-orientation of any of the contact forms $\alpha_t$). 
\end{prop}
\begin{proof}
That $(\phi_t(\sky_p))_{t\in \R}$ is a Legendrian isotopy follows from the fact that each $\phi_t$ is a contactomorphism. It is left to show that the isotopy is positive. Let $[\beta]\in \sky_p$. We can parametrize $\beta$ as a pre-geodesic so that $\beta(t)=(x(t),t)$. Note that $\pi(\phi_t([\beta]))=x(t)$. Let $\psi:U_{h_t}M \to S^{\ast}M$ be the natural map from the unit tangent bundle to the spherical cotangent bundle. By definition of $\phi_t$, there is some $\lambda>0$ with $\psi^{-1}(\phi_t([\beta])) = \lambda x'(t)$. Therefore
\begin{align*}
\alpha_t\left(\frac{d}{dt}\phi_t([\beta])\right) &= h_t\left(\psi^{-1}(\phi_t([\beta])),d\pi\left(\frac{d}{dt}\phi_t([\beta])\right)\right)\\
&= h_t\left(\lambda x'(t),x'(t)\right) >0.
\end{align*}
\end{proof}

We can now prove a Bott-Samelson type result for the Cauchy surfaces of globally hyperbolic strongly refocusing spacetimes. 

\begin{prop}\label{prop:BottSamelsonForStronglyRefocusingSpacetime}
Let $X$ be a globally hyperbolic, strongly refocusing spacetime of dimension at least $3$. Then the Cauchy surface $M$ of $X$ is compact with finite fundamental group, and its universal cover has the integral cohomology ring of a CROSS.
\end{prop}
\begin{proof}
We can choose to write $X=M\times \R$ with metric $g=h-Adt^2$, as in \remref{rem:SplittingWithNoCrossTerms}. Since $(X,g)$ is strongly refocusing, there are $p,q \in X$, $p\neq q$ with $\sky_p=\sky_q$. Without loss of generality assume $q\in J^+(p)$. Then $p=(x,t_1)$ and $q=(y,t_2)$ for some $x,y\in M$ and $t_1,t_2\in \R$ with $t_1 < t_2$. Let $\sky\coloneqq \sky_p=\sky_q$. Define $\phi_t$ as in \propref{prop:NullGeodesicFlowInducesPositiveLegendrianIsotopy}. Then $(\phi_t(\sky))_{t\in [t_1,t_2]}$ is a positive Legendrian isotopy from $\phi_{t_1}(\sky)=S^{\ast}_xM$ to $\phi_{t_2}(\sky)=S^{\ast}_yM$. Since $S^{\ast}M$ is symmetric with respect to choice of fibers, there is also a positive Legendrian isotopy from $S_y^{\ast}M$ to $S_x^{\ast}M$. Concatenating these isotopies yields a non-negative Legendrian isotopy from $S^{\ast}_xM$ to itself. By \cite[Remark 1.1]{Chernov2018} this proves the claim.
\end{proof}

\begin{rem}
There is a different way to construct a non-negative Legendrian isotopy from one fiber of $S^{\ast}M$ to itself in the case where $M$ is the Cauchy surface of a globally hyperbolic strongly refocusing spacetime $(X,g)$. Let $p,q\in X$ have the same sky. By \cite[Proposition 1.1]{ChernovNemirovski2016}, \cite[Proposition 5.8]{BautistaIbortLafuente2014} there is a non-negative Legendrian isotopy between skies of any causally related points. Since $p$ and $q$ are causally related with the same sky, there is a non-negative Legendrian isotopy from $\sky_p$ to itself. Fixing some Cauchy surface $M\subset X$ with $p\in M$ and identifying $\mathcal{N}$ with $S^{\ast}M$ we get a non-negative Legendrian isotopy from a fiber of $S^{\ast}M$ to itself.
\end{rem}

The question about the cohomology of the universal cover of the Cauchy surface was an open question in \cite{ChernovKinlawSadykov2010}. The affirmative answer to it was known to Chernov and Nemirovski but not yet published.

\section{Observer-Refocusing Spacetimes}

In a Riemannian manifold, normalizing non-zero tangent vectors gives a canonical representative for each spacelike direction at a point. There is no canonical choice of representative for future-pointing null-directions at a point in a Lorentzian manifold, but it will often be useful to make \textit{some} (smooth) choice of representative for each null-direction.

\begin{defin}[lightsphere]\label{def:lightsphere}
Let $X$ be a Lorentzian manifold and $p \in X$. We denote by $\mathcal{L}_p$ the set of future-pointing null-directions at $p$, i.e. we define $\mathcal{L}_p= \{[v]_{\sim} \mid v \text{ future-pointing and null} \}$ where $\sim$ is the equivalence relation on the set of null-vectors defined through $v_1 \sim v_2 \iff \exists \lambda>0: v_1 = \lambda v_2$. Let $\mathcal{N}_pX$ be the space of null-vectors in $T_pX$. We call a smooth submanifold $L\subset \mathcal{N}_p X$ a \textit{lightsphere} at $p$ if the natural map from $L$ to $\mathcal{L}_p$ is a homeomorphism. 
\end{defin}

Such lightspheres always exist, for example one can take any auxiliary Riemannian metric $h$ on a Lorentzian manifold $(X,g)$ and define $L = \{ v \in T_p X \mid v \text{ future-pointing and null and } h(v,v)=1 \}$. 

\begin{lem}\label{lem:LightraysInFutureImpliesCompactCauchySurface}
Let $(X,g)$ be a globally hyperbolic spacetime which is very weakly refocusing with respect to some $p\in X$. Then the Cauchy surface of $(X,g)$ is compact.
\end{lem}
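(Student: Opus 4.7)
The plan is to show first that the future horismos $E^+(p) := J^+(p) \setminus I^+(p)$ is compact, then that every sufficiently late Cauchy surface is entirely contained in $I^+(p)$, and finally to exhibit such a Cauchy surface as the continuous image of the compact set $E^+(p)$ under a timelike observer flow. The main obstacle will be in the last step, where I must check that past integral curves of a chosen timelike vector field from any point of $S_T$ really do exit $J^+(p)$ through $E^+(p)$ rather than escape some other way.

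First I would fix a lightsphere $L$ at $p$ (see \defref{def:lightsphere}). For each $v \in L$, the very weak refocusing hypothesis supplies a finite first-entry time $t_v^* := \inf\{t > 0 : \alpha_v(t) \in I^+(p)\}$, and since $I^+(p)$ is open and geodesics depend continuously on initial data, $v \mapsto t_v^*$ is upper semi-continuous on the compact set $L$, hence bounded above by some $T_0$. The standard generation fact for the achronal boundary in a globally hyperbolic spacetime gives $E^+(p) \subset \{\alpha_v(t) : v \in L,\ 0 \leq t \leq t_v^*\}$, which is the image of a closed subset of the compact cylinder $L \times [0, T_0]$ under the continuous geodesic map, and therefore compact. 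Since $E^+(p)$ is itself closed in $X$, being the intersection of the closed set $J^+(p)$ with the closed complement of $I^+(p)$, this forces $E^+(p)$ to be compact.

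Next I would pick a smooth Cauchy temporal function $\mathcal{T} : X \to \R$ with $\mathcal{T}(p) = 0$ and set $T_1 := \max_{E^+(p)} \mathcal{T} < \infty$. For any $T > T_1$ the Cauchy surface $S_T := \mathcal{T}^{-1}(T)$ is disjoint from $E^+(p)$, so it splits as $S_T = (S_T \cap I^+(p)) \sqcup (S_T \setminus J^+(p))$, a disjoint union of two open subsets of $S_T$. The first piece is nonempty, since the future-directed integral curve of any timelike observer field through $p$ meets $S_T$ in a point of $I^+(p)$. Connectedness of $S_T$ (as a Cauchy surface of the connected spacetime $X$, via the Geroch splitting) then forces the second piece to be empty, so $S_T \subset I^+(p)$.

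Finally I would fix a future-directed timelike vector field $V$ on $X$ whose integral curves are inextendible in both directions (and so intersect every Cauchy surface exactly once), for example one supplied by the Bernal-Sanchez smooth splitting. For $r \in E^+(p)$, let $\Psi(r) \in S_T$ be the unique intersection of the future integral curve of $V$ through $r$ with $S_T$. This defines a continuous map $\Psi: E^+(p) \to S_T$; I claim it is surjective. Take any $q \in S_T \subset I^+(p)$ and follow the past integral curve $c_q$ of $V$ from $q$. Since $c_q$ is past-inextendible and timelike, $\mathcal{T} \circ c_q$ strictly decreases and is unbounded below, because $c_q$ must meet every past Cauchy level of $\mathcal{T}$. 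On the other hand $\mathcal{T} \geq 0$ on $J^+(p)$, since $\mathcal{T}$ strictly increases along future-directed causal curves issuing from $p$. Hence $c_q$ leaves $J^+(p)$ at some first exit point $r_q$, and this point lies in $\partial J^+(p) = E^+(p)$, with $\Psi(r_q) = q$ by construction. Therefore $S_T = \Psi(E^+(p))$ is the continuous image of a compact set and hence compact.
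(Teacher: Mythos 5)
Your argument is correct, and while its first three ingredients run parallel to the paper's proof, its final compactness step is genuinely different. Like the paper, you use compactness of the lightsphere $L$ to get a uniform bound on the entry times of the null geodesics from $p$ into $I^+(p)$ (the paper via a finite subcover, you via upper semicontinuity of the first-entry time, which is equivalent and also, as you note, correctly yields closedness of the set $\{(v,t): 0\le t\le t_v^*\}$ inside the domain of the geodesic map even without null completeness), and like the paper you use the horismos generation fact that every point of $J^+(p)\setminus I^+(p)$ lies on a null geodesic from $p$, together with an open-closed/connectedness argument on a sufficiently late Cauchy surface. Where you diverge is at the end: the paper chooses $S$ with the compact set $K$ in $I^-(S)$, shows $J^+(p)\cap S=S$, and then simply invokes the standard global-hyperbolicity fact that $J^+(p)\cap S$ is compact for any Cauchy surface $S$; you instead first prove that the horismos $E^+(p)$ is compact, show $S_T\subset I^+(p)$ for $T>\max_{E^+(p)}\mathcal{T}$, and then exhibit $S_T$ as the continuous image of $E^+(p)$ under the flow-out map $\Psi$ along an inextendible timelike vector field, with surjectivity coming from the first-exit-point argument (which is sound: the past integral curve leaves $J^+(p)$ because $\mathcal{T}\ge 0$ there and $\mathcal{T}$ is unbounded below along the curve, and the exit point lies in $\partial J^+(p)=J^+(p)\setminus I^+(p)$ since $J^+(p)$ is closed and $\mathrm{int}\,J^+(p)=I^+(p)$). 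Your route is more self-contained, avoiding the citation of compactness of $J^+(p)\cap S$, and it yields the compactness of $E^+(p)$ as a structural byproduct; the paper's route is shorter by delegating exactly that compactness to a standard reference. The only places where you should add a line in a final write-up are the continuity of $\Psi$ (hitting-time continuity via the implicit function theorem, or simply take $V=\partial_t$ and $\mathcal{T}$ from the same Bernal--S\'anchez splitting, where $\Psi$ is the obvious projection) and the remark that maximal integral curves of a nowhere-vanishing timelike field are inextendible causal curves, hence meet every Cauchy surface exactly once.
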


\begin{proof}
Let $L$ be some lightsphere at $p$. For each $v\in L$ let $t_v$ be such that $\alpha_v(t_v) \in I^+(p)$. Take some open neighborhood $U_v$ around $v$ such that for all $w\in \overline{U_v}$ we have that $\alpha_w(t_v) \in I^+(p)$. Since $L$ is compact, we can cover it with a finite number of such neighborhoods $U_{v_1},\dots,U_{v_n}$. Consider the set $K=\bigcup_{i=1}^n \{\alpha_w(t_{v_i}) \mid w \in \overline{U_{v_i}}\}$, which is compact as a finite union of compact sets. Let $S \subset X$ be a Cauchy surface with $K \subset I^-(S)$ and consider the set $J^+(p) \cap S$. Obviously $J^+(p) \cap S$ is a non-empty, closed subset of $S$. Further, we get that
$$\partial_S \left( J^+(p) \cap S \right) \subseteq \partial_X (J^+(p)) \cap S = \left( J^+(p) \setminus I^+(p) \right) \cap S = \emptyset,$$
where the last equality holds for the following reason. Let $q \in J^+(p) \setminus I^+(p)$. Then $q$ must lie on a future-oriented null-geodesic through $p$, i.e. $q = \alpha_v(t)$ for some $v \in L$ and some $t >0$. Let $i\in \{1,\dots,n\}$ be such that $v\in U_{v_i}$. Since we chose $q \notin I^+(p)$, we must have $t\leq t_{v_i}$. But this means that $q \in I^-(S)$, hence $q \notin S$. This shows that $J^+(p) \cap S$ has empty boundary in $S$, which means $J^+(p) \cap S = S$. Since $J^+(p) \cap S$ is always compact, we are done.
\end{proof}

\begin{cor}\label{cor:EveryTildeZxManifoldIsCompact}
If $(M,h)$ is a $\tilde{Z}^x$ manifold, then $M$ is compact. In particular, every $Z^x$ manifold is compact.
\end{cor}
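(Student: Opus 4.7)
The plan is to combine the two immediately preceding results: the proposition that the spacetime associated to a $\tilde{Z}^x$ manifold is very weakly refocusing, and \lemref{lem:LightraysInFutureImpliesCompactCauchySurface} which concludes compactness of Cauchy surfaces from very weak refocusing in the globally hyperbolic setting. The only hypothesis of the lemma that is not immediately supplied by the proposition is global hyperbolicity of the associated spacetime $(X,g)$, but this was already recorded in the remark following the definition of the associated spacetime: completeness of $(M,h)$ makes $(X,g) = (M \times \R, h \oplus -dt^2)$ globally hyperbolic, with each slice $M \times \{t\}$ a smooth spacelike Cauchy surface.

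First I would invoke the proposition that the associated spacetime $(X,g)$ of a $\tilde{Z}^x$ manifold is very weakly refocusing with respect to $p=(x,0)$. Next I would note that $(X,g)$ is globally hyperbolic by the remark cited above. Applying \lemref{lem:LightraysInFutureImpliesCompactCauchySurface} to $(X,g)$ at the point $p$ yields a compact Cauchy surface. Since any two Cauchy surfaces of a globally hyperbolic spacetime are diffeomorphic, and $M \times \{0\}$ is a Cauchy surface diffeomorphic to $M$, we conclude that $M$ itself is compact.

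For the second sentence, I would observe that every $Z^x$ manifold trivially satisfies \defref{def:tildeZx}: if every unit-speed geodesic $\gamma$ from $x$ returns to $x$ at some positive time $t_\gamma$, then for any $\varepsilon>0$ one can take $t = t_\gamma$ (or a later return time, which exists by completeness and the fact that once $\gamma$ returns once it continues as a geodesic from $x$), and then $d(x,\gamma(t))=0<\varepsilon$, while $t$ can be made larger than $\varepsilon$ by taking enough returns. Hence every $Z^x$ manifold is a $\tilde{Z}^x$ manifold and the first part applies.

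The main obstacle here is essentially nonexistent at this point in the paper: the heavy lifting — showing very weak refocusing of the associated spacetime and deducing Cauchy surface compactness — has already been done in the preceding proposition and lemma. The only subtlety worth double-checking is that the ``In particular'' clause really does follow, i.e.\ that $Z^x \subseteq \tilde{Z}^x$; the one mild point is making sure that $t$ in \defref{def:tildeZx} can be chosen strictly greater than $\varepsilon$, which is handled by iterating the geodesic return as indicated above.
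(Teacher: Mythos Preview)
Your proposal is correct and follows exactly the paper's approach: the paper's proof simply says the result follows from \lemref{lem:LightraysInFutureImpliesCompactCauchySurface} together with the fact that the associated spacetime of a $\tilde{Z}^x$ manifold is very weakly refocusing. Your write-up just spells out the bookkeeping (global hyperbolicity of the associated spacetime, identifying $M$ with a Cauchy surface, and the already-noted inclusion $Z^x \subseteq \tilde{Z}^x$) that the paper leaves implicit.
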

\begin{proof}
This follows from \lemref{lem:LightraysInFutureImpliesCompactCauchySurface} by the fact that the spacetime associated to $(M,h)$ is very weakly refocusing.
\end{proof}

\begin{prop}\label{prop:ObserverRefocusingWithGammaThroughPHasCompactCauchySurface}
Let $(X,g)$ be a globally hyperbolic spacetime, $p\in X$ and $\gamma$ a timelike curve such that $(X,g)$ is observer-refocusing with respect to $p$ and $\gamma$. If $p=\gamma(0)$, then $(X,g)$ is very weakly refocusing. In particular $(X,g)$ has a compact Cauchy surface.
\end{prop}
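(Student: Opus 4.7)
The plan is to reduce directly to \lemref{lem:LightraysInFutureImpliesCompactCauchySurface}: once we show that $(X,g)$ is very weakly refocusing with respect to $p$, the compactness of the Cauchy surface follows immediately. So the entire task is to check that every future-pointing null-geodesic $\alpha$ starting at $p$ eventually enters $I^+(p)$.

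First I would fix such an $\alpha$ and invoke observer-refocusing to obtain a $t_0 > 0$ and a parameter value $s$ in the domain of $\gamma$ with $\alpha(t_0)=\gamma(s)$. Since $\gamma(0)=p$, there are three cases according to the sign of $s$, and I would assume without loss of generality that $\gamma$ is future-directed (otherwise reverse its parametrization). The generic case $s>0$ is what we want: $\gamma(s)\in I^+(\gamma(0))=I^+(p)$, so $\alpha(t_0)\in I^+(p)$ and we are done. The remaining two cases must be excluded using causality.

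If $s=0$, then $\alpha(t_0)=p=\alpha(0)$, so $\alpha|_{[0,t_0]}$ is a closed causal curve, contradicting the causality of the globally hyperbolic spacetime $(X,g)$. If $s<0$, then $\gamma(s)\in I^-(p)$, so there is a future-directed timelike curve $\sigma$ from $\alpha(t_0)=\gamma(s)$ to $p$. Concatenating $\alpha|_{[0,t_0]}$ (future-directed causal, from $p$ to $\alpha(t_0)$) with $\sigma$ (future-directed timelike, from $\alpha(t_0)$ back to $p$) produces a closed future-directed causal curve through $p$, again contradicting causality. Hence only the first case occurs, so $(X,g)$ is very weakly refocusing with respect to $p$.

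The compactness of the Cauchy surface is then immediate from \lemref{lem:LightraysInFutureImpliesCompactCauchySurface}. I do not expect a real obstacle here: the argument is essentially bookkeeping about where $\alpha(t_0)$ can lie on $\gamma$, with the causality from global hyperbolicity ruling out the degenerate possibilities. The only mild subtlety is fixing a convention on the orientation of $\gamma$, which is handled at the outset.
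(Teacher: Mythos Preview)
Your proof is correct and follows essentially the same route as the paper: you use observer-refocusing to place $\alpha(t_0)$ on $\gamma$, then use causality (via global hyperbolicity) to force $s>0$ and conclude $\alpha(t_0)\in I^+(p)$, reducing to \lemref{lem:LightraysInFutureImpliesCompactCauchySurface}. The paper compresses your case analysis into the single observation that $\alpha_v(t)\in J^+(p)\setminus\{p\}$ together with $\gamma$ timelike forces $s>0$, but the underlying reasoning is identical.
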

\begin{proof}
For every future-directed null vector $v\in T_pX$ there is some $q\in \im(\gamma)$ and $t>0$ such that $\alpha_v(t) \in \im(\gamma)$, i.e. $\alpha_v(t)=\gamma(s)$ for some $s\in \R$. We have $\alpha_v(t) \in J^+(p) \setminus \{p\}$. Since $\gamma$ is timelike we must have $s>0$ and hence $\alpha_v(t)=\gamma(s) \in I^+(p)$. We apply \lemref{lem:LightraysInFutureImpliesCompactCauchySurface} and are done.
\end{proof}

\begin{lem}\label{lem:DenseSubsetOfLsuffices}
Let $(X,g)$ be a globally hyperbolic spacetime, $p\in X$, let $L$ be a lightsphere at $p$, let $L'\subseteq L$ be a dense subset of $L$, and let $T>0$ so that $\alpha_v(T)$ is defined for all $v\in L$. If for every null-geodesic $\alpha$ with $\alpha(0)=p$ and $\alpha'(0) \in A$ we have that $\alpha(t) \in I^+(p)$ for all $t>T$, then $X$ has a compact Cauchy surface.
\end{lem}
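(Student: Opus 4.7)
The plan is to reduce to \lemref{lem:LightraysInFutureImpliesCompactCauchySurface} by showing that the dense hypothesis forces $(X,g)$ to be very weakly refocusing with respect to $p$. First, I would introduce the set $L_T := \{ v \in L : \alpha_v \text{ is defined at } T \text{ and } \alpha_v(T) \in I^+(p) \}$. Because $I^+(p)$ is open and the map $v \mapsto \alpha_v(T)$ is continuous on its (open) domain of definition, $L_T$ is open in $L$; by hypothesis $A \subseteq L_T$, hence $L_T$ is open and dense in $L$.

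The core of the argument is to promote this density to the equality $L_T = L$. For an arbitrary $v_0 \in L$, pick a sequence $v_n \in A$ with $v_n \to v_0$. For each $n$ there is a future-directed timelike curve $\sigma_n$ from $p$ to $\alpha_{v_n}(T) \in I^+(p)$. By the limit-curve theorem in globally hyperbolic spacetimes, a subsequence of $(\sigma_n)$ converges in the $C^0$ topology to a future-directed causal curve $\sigma$ from $p$ to $\alpha_{v_0}(T)$ (noting that $\alpha_{v_0}(T)$ is defined on a tail of the sequence, so one can pass to a further subsequence). If $\sigma$ is timelike, then $\alpha_{v_0}(T) \in I^+(p)$ and $v_0 \in L_T$. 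Otherwise $\sigma$ is (a reparameterization of) a future-directed null-geodesic from $p$ to $\alpha_{v_0}(T)$, and if it differs from $\alpha_{v_0}\big|_{[0,T]}$ as an unparameterized curve, the classical fact that two distinct null-geodesics from $p$ meeting at a common future point $q$ force $q \in I^+(p)$ again yields $v_0 \in L_T$. The only remaining possibility is that $\sigma$ coincides with $\alpha_{v_0}\big|_{[0,T]}$, so that the timelike curves $\sigma_n$ degenerate into the null-geodesic $\alpha_{v_0}$.

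The main obstacle is this degenerate case. To rule it out, I would iterate: because $\alpha_{v_n}(T) \in I^+(p)$ and $I^+(p)$ is preserved under future causal extension, $\alpha_{v_n}(T+\delta) \in I^+(p)$ for every $\delta > 0$ where it is defined, so the same limit-curve argument at time $T + \delta$ either places $\alpha_{v_0}(T+\delta) \in I^+(p)$ (which already shows that $\alpha_{v_0}$ eventually enters $I^+(p)$ and hence witnesses very weak refocusing at $v_0$) or reproduces the degeneracy. Persistent degeneracy for all $\delta > 0$ would make $\alpha_{v_0}$ an inextendible achronal null-generator of $\partial I^+(p)$; combined with the upper-semicontinuity of the null cut-locus function $v \mapsto t^*_v := \inf\{t > 0 : \alpha_v(t) \in I^+(p)\}$ and the uniform bound $t^*_{v_n} < T$ on the dense approach, a careful limit-curve analysis of the null generators of $\partial I^+(p)$ rules this out. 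Once $L_T = L$ is established, $(X,g)$ is very weakly refocusing at $p$, and \lemref{lem:LightraysInFutureImpliesCompactCauchySurface} immediately gives the compact Cauchy surface.
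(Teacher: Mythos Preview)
Your overall strategy---reduce to \lemref{lem:LightraysInFutureImpliesCompactCauchySurface} by showing very weak refocusing at $p$---matches the paper's, but the execution has genuine gaps and is far more circuitous than necessary.

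First, you never pass to a null-geodesically complete conformal representative. Without this, $\alpha_{v_0}(T)$ need not be defined for $v_0 \in L \setminus A$, so neither your limit-curve step (which needs the endpoints $\alpha_{v_n}(T)$ to converge to $\alpha_{v_0}(T)$) nor the later semicontinuity step is available. The paper's proof begins precisely with this reduction.

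Second, the claim that two distinct null-geodesics from $p$ meeting at a common point $q$ force $q \in I^+(p)$ is false: in the spacetime $S^2 \times \R$ associated to the round sphere, all null-geodesics from $(N,0)$ meet at the antipode $(S,\pi)$, which lies on $\partial I^+((N,0))$, not in the interior. What is true is that any point strictly beyond $q$ along either geodesic lies in $I^+(p)$; that suffices for very weak refocusing, but not for the conclusion $v_0 \in L_T$ that you draw.

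Third, and most importantly, your treatment of the degenerate case is not an argument. You invoke ``upper-semicontinuity'' of $t^*_v$, but the inequality you need is $t^*_{v_0} \le \liminf t^*_{v_n}$, i.e.\ \emph{lower} semicontinuity; this is exactly the statement that the future null cut locus $C_N^+(p)$ is closed, which holds in globally hyperbolic null-geodesically complete spacetimes (Beem--Ehrlich--Easley). The phrase ``a careful limit-curve analysis of the null generators of $\partial I^+(p)$ rules this out'' is not a substitute for that citation. And once you have that fact, the entire limit-curve apparatus is superfluous. The paper's proof is simply: pass to a null-complete conformal model, take $v_n \in A$ with $v_n \to v$, let $t_n \le T$ be the null cut time along $\alpha_{v_n}$ (finite since $\alpha_{v_n}(T) \in I^+(p)$), extract a convergent subsequence $t_n \to t$, and conclude $\alpha_v(t) \in C_N^+(p)$ by closedness of the cut locus, whence $\alpha_v(t') \in I^+(p)$ for all $t' > t$. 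Your detour through limit curves ultimately leans on the very fact you could have invoked at the start.
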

\begin{proof}
Let $v\in L$ and let $v_n$ be a sequence in $L'$ converging towards $v$. Let $t_n$ be such that for every $n\in \N$ we have that $\alpha_{v_n}(t_n)$ is the null-cut point of $\alpha_{v_n}|_{[0,t_n]}$. By assumption, the sequence $(t_n)_{n \in \N}$ is bounded by $T$. Without loss of generality, assume that the sequence converges to some value $t$, else go to a subsequence. Since the null-cut locus $C_N^+(p)$ is closed (see \cite[Proposition 6.5]{BeemEhrlich1979}), we have $\alpha_v(t) \in C_N^+(p)$. Therefore $\alpha_v(t') \in I^+(p)$ for all $t'>t$. The result follows by applying \lemref{lem:LightraysInFutureImpliesCompactCauchySurface}.
\end{proof}

\begin{lem}\label{lem:Ifexp(t_nv_n)ConvergesThen(t_n)IsBounded}
Let $X$ be a globally hyperbolic spacetime, $p\in X$, and $L$ an embedded lightsphere at $p$. If $(v_n,t_n)_{n\in \N}$ is a sequence in $L\times \R$ such that $\alpha_{v_n}(t_n)$ is always defined and such that $(\alpha_{v_n}(t_n))_{n \in \N}$ converges to some $q\in X$, then $(t_n)_{n \in \N}$ is bounded. 
\end{lem}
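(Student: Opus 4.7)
My plan is to argue by contradiction. Suppose $(t_n)_{n\in\N}$ is unbounded. By the time-reversal symmetry of the statement, we may pass to a subsequence with $t_n\to+\infty$ (the case $t_n\to-\infty$ is handled identically with past-directed arguments). I will show that this forces the limit null geodesic to be totally imprisoned in a compact subset of $X$, contradicting the non-imprisonment principle in strongly causal spacetimes.

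First I would exploit compactness of the lightsphere $L$: since $L$ is homeomorphic to $\mathcal{L}_p$, a topological sphere of dimension $\dim X - 2$, we may pass to a further subsequence so that $v_n\to v\in L$, with $v$ again future-pointing and null. By continuous dependence of the geodesic flow on initial data (the domain of $\exp_p$ is an open subset of $T_pX$), for every $T_0$ strictly less than the supremum $T^*\in(0,\infty]$ of the maximal interval of existence of $\alpha_v$, the geodesic $\alpha_{v_n}$ is eventually defined on $[0,T_0]$ with $\alpha_{v_n}(T_0)\to\alpha_v(T_0)$.

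Next, fix any such $T_0$. For $n$ large, $t_n>T_0$ and the future-directed null segment of $\alpha_{v_n}$ from time $T_0$ to time $t_n$ witnesses $\alpha_{v_n}(T_0)\in J^-(\alpha_{v_n}(t_n))$. Global hyperbolicity implies causal simplicity, so $J^-(q)$ is closed, and passing to the limit gives $\alpha_v(T_0)\in J^-(q)$. Since $T_0\in(0,T^*)$ was arbitrary, $\alpha_v([0,T^*))\subseteq J^+(p)\cap J^-(q)$, which is compact by global hyperbolicity.

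But $\alpha_v|_{[0,T^*)}$ is the inextendible maximal null geodesic with initial data $(p,v)$, and no inextendible causal curve in a strongly causal (hence globally hyperbolic) spacetime can be totally imprisoned in a compact set (see \cite{BeemEhrlichEasley}). This contradiction completes the argument. The main delicate points are the careful use of continuous dependence of the geodesic flow on initial data (in particular, the lower-semicontinuity of the maximal existence interval in the data, which is what lets us evaluate the $\alpha_{v_n}$ at a common time $T_0$) and the invocation of non-imprisonment; both are standard facts in Lorentzian geometry that must be explicitly invoked.
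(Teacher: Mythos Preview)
Your argument is correct and takes a genuinely different route from the paper's. The paper argues constructively via a smooth temporal function $\mathcal{T}$ whose level sets are Cauchy surfaces: by the implicit function theorem, the time $f(v,s)$ at which $\alpha_v$ meets the slice $\mathcal{T}^{-1}(s)$ depends smoothly on $(v,s)\in L\times[s_1,s_2]$ (with $s_1<\mathcal{T}(q)<s_2$), so compactness of $L\times[s_1,s_2]$ bounds the image of $f$ and hence eventually all $t_n$, since $\mathcal{T}(\alpha_{v_n}(t_n))\in(s_1,s_2)$ for large $n$. You instead invoke directly the defining feature of global hyperbolicity---compactness of the diamond $J^+(p)\cap J^-(q)$---and, using closedness of the causal relation (note: you need $J$ closed as a relation on $X\times X$, not just $J^-(q)$ closed, since both endpoints vary; this is still guaranteed by global hyperbolicity), trap the limiting maximal geodesic $\alpha_v$ inside it, contradicting non-imprisonment. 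Your approach is more conceptual and avoids the temporal-function apparatus; the paper's yields an explicit uniform bound. Two small corrections: your parenthetical ``strongly causal (hence globally hyperbolic)'' has the implication reversed---you need that globally hyperbolic implies strongly causal; and for the final step you should make explicit that a \emph{maximal} null geodesic is future-inextendible \emph{as a causal curve} (standard: a future endpoint would sit in a convex normal neighborhood, along which the geodesic could be extended), since that is precisely the hypothesis the non-imprisonment theorem requires.
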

\begin{proof}
Let $\mathcal{T}:X\to \R$ be a smooth, surjective temporal function such that every level set of $\mathcal{T}$ is a Cauchy surface. Take some $s_1,s_2 \in \R$ such that $s_1 < \mathcal{T}(q) <s_2$. Since $X$ is globally hyperbolic and since level sets of $\mathcal{T}$ are Cauchy surfaces, for every $v\in L$ and $s\in [s_1,s_2]$, the equation $\mathcal{T}(\exp(tv)) = s$ is satisfied by a unique $t\in \R$. Since $\mathcal{T}$ has timelike gradient, we have $\frac{d}{dt} \mathcal{T}(\exp tv) \neq 0$ for all $t$. By the implicit function theorem, the function $f:L\times [s_1,s_2]\to \R$ implicitly defined through $\mathcal{T}(\exp(f(v)v))=s$ is smooth. Since $L\times [s_1,s_2]$ is compact, so is its image under $f$. Let $N\in \N$ be such that for all $n\geq N$ we have $\mathcal{T}(\alpha_{v_n}(t_n)) \in (s_1,s_2)$. This means that for all $n\geq N$ we have $t_n \in f(L\times [s_1 ,s_2])$, which is compact hence bounded.
\end{proof}

\begin{lem}\label{lem:int(intersection)subsetRefocusing}
Let $(X,g)$ be a globally hyperbolic spacetime of dimension at least $3$, let $p\in X$ and let $\gamma$ be some timelike curve in $X$ with $\im(\gamma)$ closed. Let $L$ be a lightsphere at $p$ and let $T>0$ be such that $\alpha_v(T)$ is defined for all $v\in L$. Let $A = \{v \in L \mid \alpha_v|_{(0,T]} \text{ intersects } \gamma\}$ and $B = \{v \in L \mid p \text{ has a conjugate point } q \in \im(\gamma) \text{ along } \alpha_v|_{(0,T]}\}$. Then $\interior(A) \subseteq B$.
\end{lem}

\begin{proof}
Let $v\in \interior(A)$. Since $\dim(X)\geq 3$ implies $\dim(L)\geq 1$, there is some sequence $(v_n)_{n\in \N}$ in $A$ converging to $v$. For each $n\in \N$ let $t_n \in (0,T])$ and $q_n \in \im(\gamma)$ be such that $\alpha_{v_n}(t_n)=q_n$. Since all $q_n$ are contained in the compact set $\{\exp(tv)\ \mid v\in L, t\in [0,T]\}$, we can assume that $(t_n)_{n\in \N}$ and $(q_n)_{n\in \N}$ converge to some $t$ and $q$ respectively, else go to convergent subsequences. Toward a contradiction assume that $d\exp_{tv}$ is not singular. Then there exists an open neighborhood $U$ of $tv$ in $\mathcal{N}_p$ such that $\exp|_U$ is a diffeomorphism onto its image, in particular $N\coloneqq \exp(U)\subset X$ is an embedded null hypersurface, i.e. an embedded hypersurface whose normal vectorfield is null. This implies that $TN\subset TX$ contains no timelike vectors, hence all intersections of $\gamma$ and $\exp(U)$ must be transverse, and therefore isolated. This is a contradiction to the fact that any neighborhood of $tv$ contains infinitely many null vectors $t_nv_n$ with $\exp(t_nv_n) \in \im(\gamma)$. We conclude that $d\exp_{tv}$ is singular. This means that $q = \exp(tv)$ is conjugate to $p$ along $\alpha_v|_{(0,T]}$, and hence $v\in B$.
\end{proof}

\begin{rem}\label{rem:wlogOurSpacetimeIsNullgeodesicallyComplete}
In general, the spacetimes $(X,g)$ we consider will not be null-geodesically complete, so expressions of the type $\alpha_v(t)$ might become undefined for large $t$. However, every globally hyperbolic spacetime $(X,g)$ is conformally equivalent to a spacetime $(X',g')$ which \textit{is} null-geodesically complete (see \cite{Clarke1971}, where this is proven for all strongly causal spacetimes). Conformal isomorphisms preserve both the causal structure and topology of our spacetime. Therefore, when appropriate, we will assume \textit{without loss of generality} that our spacetime $(X,g)$ is null-geodesically complete.
\end{rem}

\begin{rem}
Another way to handle the inconvenience of expressions of the type $\alpha_v(t)$ becoming undefined for large $t$ would be to parametrize null-geodesics using a surjective Cauchy temporal function $\mathcal{T}$, i.e. picking for each null-geodesic $\alpha_v$ a parametrization $t\mapsto\rho_v(t)$ such that $\alpha_v(t)\coloneqq \exp(\rho_v(t)v)$ with $\mathcal{T}(\alpha_v(t))=t$. The role of the uniform bound $T$ in, for example, \lemref{lem:DenseSubsetOfLsuffices} is then simply that the refocusing is happening inside a Cauchy slab $\mathcal{T}^{-1}([\mathcal{T}(p),T])$. We elect instead to keep the natural parametrization $\alpha_v(t)=\exp(tv)$, because it makes some calculations nicer.
\end{rem}

\begin{thm}\label{thm:MainResult}
Let $(X,g)$ be a globally hyperbolic spacetime of dimension at least $3$, let $p\in X$, let $L$ be some lightsphere at $p$, let $\{\gamma_i\}_{i\in I}$ be some countable family of timelike curves and let $T>0$. If for each future-oriented null-geodesic $\alpha$ with $\alpha(0)=p$ and $\alpha'(0) \in L$ there is some $t \in (0,T]$ and $i \in I$ such that $\alpha(t) \in \im(\gamma_i)$, then $(X,g)$ has a compact Cauchy surface and finite fundamental group. 
\end{thm}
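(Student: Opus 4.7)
The plan is to reduce \theoref{thm:MainResult} directly to \propref{prop:BoundedCountablyStronglyRefocusingSpacetimesHaveCompactCauchySurfaces} by replacing the countable family of timelike curves $\{\gamma_i\}_{i\in I}$ by a suitable countable family of \emph{points}. The key observation is that only countably many points on each curve $\gamma_i$ can actually be hit by a null-geodesic emanating from $p$, so the relevant part of $\bigcup_i \im(\gamma_i)$ is already countable.

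First, for each $i \in I$, consider the set
\[
R_i \;=\; R_p^{\gamma_i} \;=\; \{\, q \in \im(\gamma_i) \mid \exists v \in T_p X \text{ future-pointing lightlike and } t>0 \text{ with } \alpha_v(t)=q \,\}.
\]
By \propref{prop:RgammapIsDiscrete}, each $R_i$ is a discrete subset of $X$. Since $X$ is a manifold and hence second countable, every discrete subspace is at most countable, so each $R_i$ is countable. Setting $Q \coloneqq \bigcup_{i\in I} R_i$, we obtain a countable subset of $X$ as a countable union of countable sets. Enumerate $Q = \{q_j\}_{j\in J}$ with $J$ countable.

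Next, verify that the hypothesis of \propref{prop:BoundedCountablyStronglyRefocusingSpacetimesHaveCompactCauchySurfaces} holds for $p$, $L$, $T$, and the family $\{q_j\}_{j\in J}$. Given any future-oriented lightlike geodesic $\alpha$ with $\alpha(0)=p$ and $\alpha'(0) \in L$, the hypothesis of \theoref{thm:MainResult} supplies some $t \in (0,T]$ and some $i \in I$ with $\alpha(t) \in \im(\gamma_i)$. But then $\alpha(t)$ is a point of $\im(\gamma_i)$ reached by a future-pointing lightlike geodesic from $p$ in positive time, so by definition $\alpha(t) \in R_i \subseteq Q$, i.e.\ $\alpha(t) = q_j$ for some $j \in J$.

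Finally, apply \propref{prop:BoundedCountablyStronglyRefocusingSpacetimesHaveCompactCauchySurfaces} to the countable point-set $\{q_j\}_{j\in J}$ to conclude that $(X,g)$ has compact Cauchy surface and finite fundamental group. There is no real obstacle here, since the work has already been done: \propref{prop:RgammapIsDiscrete} is precisely what makes the reduction from one-dimensional curves to zero-dimensional discrete sets possible, and \propref{prop:BoundedCountablyStronglyRefocusingSpacetimesHaveCompactCauchySurfaces} already contains both the Baire category argument needed for compactness of the Cauchy surface and the lift-to-the-universal-cover argument needed for finiteness of $\pi_1$.
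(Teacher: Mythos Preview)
Your proof is correct and follows exactly the same route as the paper: use \propref{prop:RgammapIsDiscrete} to see that each $R_p^{\gamma_i}$ is discrete (hence countable), take the countable union $\bigcup_{i\in I} R_p^{\gamma_i}$, and apply \propref{prop:BoundedCountablyStronglyRefocusingSpacetimesHaveCompactCauchySurfaces}. Your write-up is slightly more detailed (e.g.\ spelling out second countability and verifying the hypothesis explicitly), but the argument is identical.
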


\begin{proof}
By \remref{rem:wlogOurSpacetimeIsNullgeodesicallyComplete}, we can assume that $(X,g)$ is null-geodesically complete. Further, we can assume without loss of generality that the image of each $\gamma_i$ is compact, else subdivide the non-compact $\gamma_i$ into countably many overlapping compact timelike curves. For each $i\in I$ consider the set $A_i = \{v \in L \mid \alpha_v|_{(0,T]} \text{ intersects } \gamma_i \}$. Since the image of $\gamma_i$ is compact, each set $A_i$ is closed. By assumption $L = \bigcup_{i \in I} A_i$. By the Baire category theorem, $L'\coloneqq \bigcup_{i\in I} \interior(A_i)$ is dense in $L$. For all $v\in L'$, there is some $i\in I$ such that $v\in \interior(A_i)$. By \lemref{lem:int(intersection)subsetRefocusing} $p$ then has a conjugate point $q$ along $\alpha_v|_{(0,T]}$ and therefore $\alpha_v(t) \in I^+(p)$ for all $t>T$. Application of \lemref{lem:DenseSubsetOfLsuffices} yields that $(X,g)$ has a compact Cauchy surface. 

To show that the Cauchy surface of $(X,g)$ has finite fundamental group, we proceed as follows. Let $\rho: (\tilde{X},\tilde{g})\to(X,g)$ be the semi-Riemannian universal cover of $X$. Let $\tilde{p}$ be any point in $\rho^{-1}(p)$ and let $\tilde{L}$ be the lightsphere at $p$ given by $\tilde{L} = (d\rho)^{-1}(L)$. Let $\tilde{v} \in \tilde{L}$ and $v=d\rho(\tilde{v}) \in L$. Then there is some $t\in (0,T]$ and $i \in I$ with $\alpha_v(t) \in \im(\gamma_i)$. Lifting $\alpha_v$ to $\tilde{X}$, we get that $\alpha_{\tilde{v}}(t) \in \im(\tilde{\gamma_i})$ for some lift $\tilde{\gamma}_i$ of $\gamma_i$. Since each curve $\gamma_i$ has only countably many lifts, we can apply the first part of this theorem to the family $\{\tilde{\gamma} \text{ timelike curve in }\tilde{X}\mid (\rho \circ \tilde{\gamma}) \in \{\gamma_i\}_{i\in I}\}$. By applying the first part of this theorem, we can conclude that $\tilde{X}$ has a compact Cauchy surface as well. Now pick a Cauchy surface $S\subset X$ and let $\tilde{S} = \rho^{-1}(S)$. Then $\tilde{S}$ is a Cauchy surface of $\tilde{X}$ (see the proof of \cite[Theorem 11.5]{ChernovRudyak2008}) and hence compact. Further, $\tilde{S}$ is simply connected (as $\tilde{X}$ is simply connected and diffeomorphic to $\tilde{S} \times \R$), and $\rho: \tilde{S} \to S$ is a covering map. Pick any $x\in \tilde{S}$, then
$$\#\pi_1(S,x) = \# \rho^{-1}(x),$$
which is finite since $\tilde{S}$ is compact.
\end{proof}

\begin{example}\label{ex:Minkowski}
Consider $n+1$ dimensional Minkowski spacetime $X = \R^n \times \R^1_1$ and take some countable set of observers $\{\gamma_i\}_{i\in I}$ and some $p\in X$. Then the conditions of \theoref{thm:MainResult} are not fulfilled. For a null-geodesic $\alpha$ with $\alpha(0)=p$ an observer $\gamma$ can intersect at most once with $\alpha$, and so countably many observers can (together) see only countably many light rays. More explicitly, if one takes the set of $\{\gamma_i\}_{i\in I}$ to be the set of standard observers $\{t\mapsto (q,t)\}_{q\in \Q^n}$ and takes $p=(0,0)$, then all light rays with irrational initial conditions will escape. \theoref{thm:MainResult} gives a \textit{topological} reason for why this has to happen.
\end{example}

The following example shows that the conditions of \theoref{thm:MainResult} are sufficient but not necessary for a spacetime to have a compact Cauchy surface with finite fundamental group.

\begin{example}\label{ex:deSitter}
Consider the de Sitter spacetime of dimension $n+1$ ($n\geq 2$), whose Cauchy surfaces are topologically spheres of dimension $n$, so in particular are compact with finite fundamental group. We note that de Sitter space does not meet the conditions of \theoref{thm:MainResult}. If it did, there would have to be null-geodesics entering into their own chronological future, but this does not happen in de Sitter spacetimes. This makes de Sitter spacetime an example of a globally hyperbolic spacetime with compact Cauchy surfaces with finite fundamental group which is not even very weakly refocusing (and a fortiori not observer-refocusing, strongly refocusing, or refocusing).
\end{example}

\begin{cor}\label{cor:SomeObserverRefocusingSpacetimesHaveCompactCauchySurfaceWithFiniteFundamentalGroup}
Let $(X,g)$ be a globally hyperbolic spacetime of dimension at least $3$ which is observer-refocusing with respect to a point $p\in X$ and a timelike curve $\gamma$ such that the domain of $\gamma$ is compact. Then $(X,g)$ has a compact Cauchy surface and finite fundamental group. 
\end{cor}
\begin{proof}
By \remref{rem:wlogOurSpacetimeIsNullgeodesicallyComplete}, we can assume that $(X,g)$ is null-geodesically complete. Using similar arguments as in \lemref{lem:Ifexp(t_nv_n)ConvergesThen(t_n)IsBounded} compactness of $\im(\gamma)$ ensures that there is some lightsphere $L$ at $p$ and some $T>0$ such that for all $v\in L$ there is some $t\in (0,T]$ with $\alpha_v(t)\in \im(\gamma)$. Now we can use \theoref{thm:MainResult}.
\end{proof}

\begin{cor}\label{cor:SomeZxManifoldsAreCompactWithFiniteFundamentalGroup}
Let $(M,h)$ be a $Z^{(x,y)}$ manifold of dimension at least $2$ such that all unit-speed geodesics starting at $x$ reach $y$ in uniformly bounded time. Then $M$ is compact with finite fundamental group. 
\end{cor}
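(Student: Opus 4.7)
The plan is to translate the hypothesis into a statement about the associated spacetime and invoke \theoref{thm:MainResult} directly. Form the associated spacetime $(X,g) = (M \times \R, h \oplus -dt^2)$, set $p = (x,0) \in X$, and define the timelike curve $\gamma : \R \to X$ by $\gamma(t) = (y,t)$. As observed earlier in the paper, every future-oriented null-geodesic through $p$ can, up to orientation-preserving affine reparametrization, be written as $\alpha(t) = (\beta(t), t)$ where $\beta$ is a unit-speed geodesic in $(M,h)$ with $\beta(0) = x$. A natural lightsphere at $p$ is $L = \{v + \partial_t \mid v \in T_x M,\ h(v,v) = 1\}$, and the correspondence $v \leftrightarrow \beta$ is a bijection between $L$ and the unit sphere in $T_x M$.

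Next I would unpack the uniform-bound hypothesis. By assumption there exists $T > 0$ such that for every unit-speed geodesic $\beta$ in $M$ starting at $x$ there is some $t_\beta \in (0,T]$ with $\beta(t_\beta) = y$. Under the identification above, this is exactly the statement that for every $v \in L$ there is some $t \in (0,T]$ with $\alpha_v(t) = (y,t) = \gamma(t) \in \im(\gamma)$. This is precisely the hypothesis of \theoref{thm:MainResult} applied to the singleton (hence countable) family $\{\gamma\}$ with bound $T$.

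Since $\dim M \geq 2$ we have $\dim X = \dim M + 1 \geq 3$, so the dimension requirement of \theoref{thm:MainResult} is met. The theorem then yields that $(X,g)$ has compact Cauchy surface with finite fundamental group. By the warped-product remark following the definition of associated spacetime, $M \times \{0\}$ is a smooth spacelike Cauchy surface of $(X,g)$; since any two Cauchy surfaces of a globally hyperbolic spacetime are diffeomorphic, $M \cong M \times \{0\}$ is compact with finite fundamental group.

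The main obstacle is not in this corollary at all but was already overcome in the proof of \theoref{thm:MainResult} (via \propref{prop:BoundedCountablyStronglyRefocusingSpacetimesHaveCompactCauchySurfaces} together with \propref{prop:RgammapIsDiscrete} to discretize each $\im(\gamma_i)$). The uniform-time hypothesis is exactly what saves us from having to argue, as in \propref{prop:SomeObserverRefocusingSpacetimesHaveCompactCauchySurfaceWithFiniteFundamentalGroup}, that compactness of $\im(\gamma)$ forces a bounded return time; here the bound is handed to us, so the translation to the Lorentzian setting is immediate and the application is essentially a direct appeal.
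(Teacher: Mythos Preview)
Your argument is correct and follows essentially the same route as the paper: translate to the associated spacetime, set $p=(x,0)$ and $\gamma(t)=(y,t)$, and invoke the main Lorentzian result. The only cosmetic difference is that the paper restricts $\gamma$ to $[0,T]$ and cites \propref{prop:SomeObserverRefocusingSpacetimesHaveCompactCauchySurfaceWithFiniteFundamentalGroup}, whereas you feed the uniform bound $T$ straight into \theoref{thm:MainResult}; since the bound is given explicitly here, your shortcut is harmless and arguably cleaner.
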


\begin{proof}
Let $T>0$ be the uniform bound on the time it takes unit-speed geodesics starting at $x$ to reach $y$. Consider the associated spacetime $(X,g)$ to $(M,h)$. Defining $p=(x,0)$ and the standard observer $\gamma:[0,T]\to X$, $\gamma(t)=(y,t)$, the result then follows from \corolref{cor:SomeObserverRefocusingSpacetimesHaveCompactCauchySurfaceWithFiniteFundamentalGroup}.
\end{proof}

\begin{cor}
Let $(M,h)$ be a $Z^{(x,y)}$ manifold of dimension $2$ or $3$, let $x\in M$ and assume that all unit-speed geodesics starting at $x$ reach $y$ in uniformly bounded time. Then there exists a Riemannian metric $\tilde{h}$ on $M$ such that $(M,\tilde{h})$ is a $Y^x_l$ manifold.
\end{cor}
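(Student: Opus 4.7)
The plan is to reduce the statement to a topological classification of the possible diffeomorphism types of $M$ and then to exhibit an explicit round metric on each type.

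First, by \corrref{cor:SomeZxManifoldsAreCompactWithFiniteFundamentalGroup}, the manifold $M$ is compact with finite fundamental group. In dimension $2$, the classification of closed surfaces immediately gives $M \cong S^2$ or $M \cong \R P^2$. In dimension $3$, I would invoke Perelman's proof of the elliptization conjecture (the spherical case of geometrization) to conclude that $M$ is diffeomorphic to a spherical space form $S^3/\Gamma$ for some finite subgroup $\Gamma \subset \mathrm{O}(4)$ acting freely on $S^3$, with the action conjugate to a linear one by the linearization of finite smooth group actions on $S^3$. The dimension-$2$ cases can likewise be written as $S^2/\{\mathrm{id}\}$ and $S^2/\{\pm\mathrm{id}\}$.

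Next, I would let $\tilde{h}$ be the Riemannian metric on $M \cong S^n/\Gamma$ (with $n = 2$ or $3$) obtained by pushing forward the standard round metric on $S^n$ through the quotient map. This is well-defined because $\Gamma$ acts by isometries, and every unit-speed geodesic on $(M,\tilde{h})$ is the image under the quotient of a great circle on $(S^n, \text{round})$. Since great circles are $2\pi$-periodic, every unit-speed geodesic in $(M,\tilde{h})$ starting at any point $x$ returns to $x$ at time $2\pi$. Hence $(M,\tilde{h})$ is a $Y^x_{2\pi}$ manifold at every point, and in particular at the originally given $x$.

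The main substantive ingredient is Perelman's elliptization theorem in dimension $3$, together with the linearization of finite group actions on $S^3$; the dimension-$2$ case is entirely classical. Once the diffeomorphism-type classification is in hand, the verification of the $Y^x_l$ property for the round metric is immediate from the periodicity of great circles, so no further technical work is required.
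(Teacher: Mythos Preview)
Your proof is correct and essentially identical to the paper's: reduce via \corrref{cor:SomeZxManifoldsAreCompactWithFiniteFundamentalGroup} to a compact manifold with finite fundamental group, invoke Perelman's elliptization in dimension $3$ (respectively surface classification in dimension $2$) to identify $M$ as a spherical space form, and push down the round metric to obtain a $Y^x_{2\pi}$ structure. You spell out the $2$-dimensional case and mention linearization of finite group actions where the paper simply says ``similar arguments,'' but the substance is the same.
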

\begin{proof} In this case $M$ is compact and has finite fundamental group. If $\dim(M)=3$, then by the Thurston Elliptization conjecture \cite{Thurston1997} proved by Perelman \cite{Perelman2002, Perelman2003, Perelman2003-2}, $M$ is a quotient of the sphere $S^3$ (with standard metric) by a finite group of isometries. The induced metric on $M$ will make it into a $Y^x_{2\pi}$ manifold. Similar arguments work for $\dim(M)=2$. 
\end{proof}

\begin{rem}\label{rem:WeCanGetopenDenseSetWithoutUniformBound}
One can carefully check the proof of \lemref{lem:int(intersection)subsetRefocusing} and \theoref{thm:MainResult} to see that if we drop the requirement of a uniform bound $T>0$ from \theoref{thm:MainResult} there will still be an open dense subset $L'\subset L$ such that all null-geodesics $\alpha_v$ starting at $p$ with initial velocity $v\in L'$ have a conjugate point along one of the curves $\gamma_i$, hence eventually enter $I^+(p)$. In the proof of \theoref{thm:MainResult} one can define the countable family $(A_{(i,N)})_{i\in I, N\in \N}$ through $A_{(i,N)} = \{v \in L \mid \alpha_v|_{(0,N]} \text{ intersects } \gamma_i \}$. Then the union $L'$ of the interiors of all the $A_{(i,N)}$ is open and dense in $L$. But this is not enough to conclude that we have a compact Cauchy surface since we cannot apply \lemref{lem:DenseSubsetOfLsuffices} without a uniform bound. 
\end{rem}

\begin{rem}
It is unknown whether there are $Z^x$ manifolds where the time-of-return of unit-speed geodesics is not uniformly bounded \cite[Question 7.71]{Besse}. Similarly, it is unknown whether there are any globally hyperbolic observer-refocusing spacetimes where the timelike curve cannot be taken to be compact. 
\end{rem}

\section{The Analytic Case}

\begin{lem}\label{lem:DerivativeOfCurveOfLightlikeVectorsCannotBeTimelike}
Let $(X,g)$ be a Lorentzian manifold and let $h:[0,1] \to T_pX$ be a curve such that $h(t)$ is null for each $t\in [0,1]$. Then $d\exp_{h(0)}(h'(0)) = \frac{d}{dt}|_{t=0}\exp(h(t))$ is not timelike.
\end{lem}
\begin{proof}
We have that $g(h(t),h(t)) = 0$ for all $t \in [0,1]$. Therefore $2 g(h(0),h'(0)) =\frac{d}{dt}|_{t=0} g(h(t),h(t)) = 0$. This means that $h'(0)$ is perpendicular to the null-vector $h(0)$. Let us denote $v=h(0)$ and $w=h'(0)$. By the Gauss lemma, 
$$g(d\exp_v(v),d\exp_v(w))=g(v,w)=0.$$
Here $d\exp_v(v)$ is a null-vector since $v$ is one. Therefore $d\exp_v(w)$ is orthogonal to a null-vector and hence cannot be timelike.
\end{proof}

\begin{thm}\label{thm:NullConjugateLocusIsNice}
Let $(X,g)$ be a globally hyperbolic spacetime. Let $p\in X$ and let $L$ be a lightsphere at $p$. Let $v\in L$ be such that the geodesic $\alpha_v$ has at least $k$ points conjugate to $p$. Then there is some neighborhood $U\subseteq L$ of $v$ such that for each $w\in U$, the null-geodesic $\alpha_w$ has at least $k$ conjugate points (counted with multiplicity). Let $0<\lambda_1(w) \leq \lambda_2(w)\leq \dots \leq \lambda_k(w)$ be the parameter values at which $\alpha_w$ hits a conjugate point. Then the functions $\lambda_j$ are continuous on $U$ and they are smooth on an open, dense subset of $U$.
\end{thm}
\begin{proof}
The existence of the neighborhood $U$ and the continuity of the functions $\lambda_j$ are essentially the content of \cite[Proposition 4.2]{Rosquist1983}. The fact that each $\lambda_j$ is smooth on an open dense set of $U$ follows from \cite[Theorem 4.3]{Rosquist1983}: The set $\{\lambda_j(v)v \mid v\in U\}$ is a subset of the null $T$-conjugate locus. Let $C\subseteq \mathcal{N}_pX$ denote the null $T$-conjugate locus. The regular null $T$-conjugate locus as defined in \cite{Rosquist1983} is an open dense subset $C_R\subseteq C$ in the null $T$-conjugate locus and it is a smooth submanifold of $\mathcal{N}_pX$ transverse to rays in $\mathcal{N}_pX$. For every $j\in \{1,\dots, k\}$ the set $V_j\coloneqq \{\lambda_j(v)v \mid v\in U\}$ is an open subset of the $C$, hence $V_j\cap C_R$ is open and dense in $V_j$. Since $V_j\cap C_R$ is also an open subset of $C_R$, it is in particular a smooth submanifold of $\mathcal{N}_pX$ transverse to the rays in $\mathcal{N}_pX$. For each $j$ let $U_j\coloneqq \{\pi(w)\mid w\in V_j\cap C_R\}$ where $\pi:T_xM\setminus\{0\}\to U$ is the radial projection. Each $U_j$ is open and dense in $U$. By construction $\{\lambda_j(v)v \mid v\in U_j\}=V_j\cap C_R$ is a smooth submanifold of $\mathcal{N}_pX$ transverse to the rays in $\mathcal{N}_pX$. By the implicit function theorem the map $v\mapsto \lambda_j(v)$ is smooth on $U_j$. The intersection of all the sets $U_j$ gives one open, dense subset of $U$ on which all functions $\lambda_j$ are smooth.
\end{proof}

\begin{lem}\label{lem:MapSelectingFirstConjugatePointIsLocallyConstantSomewhere}
Let $(X,g)$ be a globally hyperbolic spacetime of dimension at least $3$. Let $\gamma$ be some timelike curve in $X$, let $p\in X$ and let $L$ be a lightsphere at $p$. Let $U\subseteq L$ be some non-empty open set. Let $f:U\to \R^{+}$ be some lower semi-continuous function such that for all $v\in U$ we have that $\exp(f(v)v) \in \im(\gamma)$ is a point conjugate to $p$ along the geodesic $\alpha_v$. Then there is some non-empty open set $V\subseteq U$ on which $\exp(f(v)v)$ is constant.
\end{lem}
\begin{proof}
Let the functions $\lambda_j$ be as in \theoref{thm:NullConjugateLocusIsNice}. For $v\in U$ we have that $\exp(f(v)v)$ is a conjugate point along $\alpha_v$. Let $k(v)$ denote the minimal $k$ with $\lambda_k(v)=f(v)$. We will show that $v\mapsto k(v)$ is lower semi-continuous. Let $(v_n)_{n\in \N}$ be some sequence in $U$ converging to some $v\in U$ and let $N\in \N$ with $k_n \coloneqq k(v_n)\leq N$ for all $n\in \N$. Then $f(v_n)=\lambda_{k_n}(v_n) \leq \lambda_N(v_n)$ for all $n\in \N$. By continuity of $\lambda_N$ and lower semi-continuity of $f$, we get that
$$f(v) \leq \liminf_{n\to\infty} f(v_n) \leq \liminf_{n\to \infty} \lambda_N(v_n) = \lambda_N(v).$$
Since $k(v)$ is the smallest $k$ with $\lambda_k(v) = f(v)$, we must have $k(v)\leq N$. We have shown that for all sequences $(v_n)_{n\in \N}\subseteq U$ converging to some $v\in U$ 
$$\{k(v_n)\}_{n\in \N} \subseteq \{1,2,\dots,N\} \implies k(v) \in \{1,2,\dots,N\}.$$
Therefore the function $v\mapsto k(v)$ is lower semi-continuous. For $i\in \N$, let $A_i = \{v\in U \mid k(v)\leq i\}$ and note that $U = \bigcup_{i\in \N} A_i$. By the Baire category theorem, there is some $i\in \N$ such that $A_i$ has non-empty interior. Take $i\in \N$ minimal with this property. Then $V\coloneqq \interior(A_i) \setminus A_{i-1}$ is open. If $\interior(A_i) \setminus A_{i-1} = \emptyset$, then we would have had $\interior(A_i)\subseteq A_{i-1}$ and hence $\interior(A_i)\subseteq \interior(A_{i-1})$, contradicting the minimality of $i$. So $V$ is non-empty. For $v\in V$ we have that $f(v) = \lambda_i(v)$ and by \theoref{thm:NullConjugateLocusIsNice} we can shrink $V$ to some smaller non-empty open set so that $f|_V = \lambda_i|_V$ is smooth. Assume that this $V$ is connected, else take a smaller $V$ that is instead. Let $s\mapsto v_s$ be any $\mathcal{C}^1$ curve in $V$ and let us denote $v=f(v_0)v_0$ and $w=\frac{d}{ds}|_{s=0}(f(v_s)v_s)$. Then, by \lemref{lem:DerivativeOfCurveOfLightlikeVectorsCannotBeTimelike}, 
$\frac{d}{ds}|_{s=0}\exp(f(v_s)v_s) = d\exp_{v}(w)$ is not timelike. But by construction $\exp(f(v_s)v_s) \in \im(\gamma)$ for all $s$, so $\frac{d}{ds}|_{s=0}\exp(f(v_s)v_s)$ is some scalar multiple of a timelike vector. This is only possible if $\frac{d}{ds}|_{s=0}\exp(f(v_s)v_s) = d\exp_v(w)=0$.
Since on $V$ all directional derivatives of $v\mapsto\exp(f(v)v)$ vanish, we conclude that $v\mapsto \exp(f(v)v)$ is constant on $V$.
\end{proof}

\begin{prop}\label{prop:ThereIsAnOpenSetOfNullVectorsWhoseGeodesicsAllIntersectInTheSamePoint}
Let $(X,g)$ be a globally hyperbolic spacetime of dimension at least $3$ which is observer-refocusing with respect to a point $p\in X$ and a timelike curve $\gamma$ in $X$. Let $L$ be a lightsphere at $p$. Then there exists a non-empty open subset $V\subseteq L$ and a point $q\in \im(\gamma)$ such that for all $v\in V$ there is some $t>0$ with $\alpha_v(t)=q$.
\end{prop}
\begin{proof}
As discussed in \remref{rem:WeCanGetopenDenseSetWithoutUniformBound}, there is an open, dense subset $L'\subseteq L$ such that for all $v\in L'$, $p$ has some conjugate point $q$ in the intersection of $\alpha_v$ and $\gamma$. We define $f:L' \to \R^+$ through $\exp(f(v)v) = \text{first conjugate point } q \in \im(\gamma) \text{ along } \alpha_v.$
It is easy to see that $f$ is lower semi-continuous. By \lemref{lem:MapSelectingFirstConjugatePointIsLocallyConstantSomewhere}, $v\mapsto \exp(f(v)v)$ is constant on some non-empty open set $V\subseteq L'$. In particular, there is some $q\in \im(\gamma)$ such that for all $v\in V$ there is some $t=f(v)>0$ such that $\alpha_v(t)=\exp(tv)=q$. 
\end{proof}

If our metric is analytic instead of just smooth, an observer-refocusing spacetime of dimension at least $3$ is always strongly refocusing.

\begin{thm}\label{thm:AnalyticMetricImpliesObserverRefocusingIsStronglyRefocusing}
Let $(X,g)$ be a globally hyperbolic, observer-refocusing spacetime of dimension at least $3$ with an analytic metric $g$. Then $(X,g)$ is strongly refocusing.
\end{thm}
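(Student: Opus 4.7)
The plan is to use analyticity to promote the fact that every future-null geodesic from $p$ hits \emph{some} point of $\im(\gamma)$ into the fact that every future-null geodesic from $p$ hits \emph{one particular} point. By observer-refocusing and \propref{prop:RgammapIsDiscrete}, the lightsphere $L$ at $p$ decomposes as a countable union
\[
L \;=\; \bigcup_{q \in R_p^\gamma} K_q, \qquad K_q \coloneqq \{v \in L \mid \exists\, t > 0,\ \alpha_v(t) = q\}.
\]
I will show that one of the $K_q$ already equals $L$, which is exactly strong refocusing with second point $q$ (note $q \neq p$ since a globally hyperbolic spacetime admits no closed causal curves).

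The first step is to verify that each $K_q$ is a closed, locally analytic subset of $L$. Closedness follows directly from \lemref{lem:Ifexp(t_nv_n)ConvergesThen(t_n)IsBounded}: if $v_n \in K_q$ with $\alpha_{v_n}(t_n) = q$ and $v_n \to v$, the $t_n$ stay bounded, a subsequential limit $t$ satisfies $\alpha_v(t) = q$, and $t > 0$ because $q \neq p$. For local analyticity, invoke the preceding lemma to obtain an analytic temporal function $\mathcal{T}$. At a point $v_0 \in K_q$ with $\alpha_{v_0}(t_0) = q$, the derivative $\partial_t \mathcal{T}(\exp_p(tv))$ at $(v_0,t_0)$ is positive because $\nabla \mathcal{T}$ is timelike and $\dot\alpha_{v_0}(t_0)$ is future-null, so the analytic implicit function theorem produces an analytic function $\tau \colon U \to \R$ on a neighborhood $U$ of $v_0$ in $L$ satisfying $\mathcal{T}(\exp_p(\tau(v) v)) = \mathcal{T}(q)$ with $\tau(v_0)=t_0$. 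Since $\mathcal{T}$ strictly increases along each null geodesic from $p$, $K_q \cap U = \{v \in U \mid \exp_p(\tau(v) v) = q\}$, which in local analytic coordinates on $X$ around $q$ is the common zero set of finitely many analytic functions on $U$.

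The last step is an analytic-continuation argument. For $\dim X \geq 3$ the lightsphere $L \cong S^{\dim X - 2}$ is compact and connected, so by the Baire category theorem applied to the countable closed cover $\{K_q\}_{q \in R_p^\gamma}$, some $K_{q_*}$ has nonempty interior $U_0 \coloneqq \interior_L(K_{q_*})$. I claim $U_0$ is also closed in $L$. Take $v_\infty \in \overline{U_0}$; then $v_\infty \in K_{q_*}$ by closedness, so on a connected neighborhood $W$ of $v_\infty$ the set $K_{q_*}$ is cut out by analytic functions $f_1,\dots,f_m$. Each $f_j$ vanishes on the nonempty open set $U_0 \cap W$, hence by the identity theorem for analytic functions each $f_j$ vanishes on all of $W$, which gives $W \subset K_{q_*}$ and therefore $v_\infty \in U_0$. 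Connectedness of $L$ then forces $U_0 = L$, i.e.\ $K_{q_*} = L$, so every future-null geodesic from $p$ passes through $q_*$.

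The main obstacle is the second step: one has to pin down the precise sense in which $K_q$ is ``locally analytic'' and verify that the analytic implicit function theorem really applies and yields cutting-out equations defined on an open subset of $L$ itself (rather than on some auxiliary parameter space). The third step is essentially classical once the second is in place, and the hypothesis $\dim X \geq 3$ enters exactly as the requirement that $L$ be connected (in dimension $2$, $L$ has two components and one only recovers a ``one-$q$-per-component'' conclusion, which is not enough for strong refocusing).
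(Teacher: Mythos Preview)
Your argument is correct and follows essentially the same route as the paper: decompose $L$ as $\bigcup_{q\in R_p^\gamma}K_q$, use Baire to find a $K_{q_*}$ with interior, and then use the analytic temporal function together with the analytic implicit function theorem to run an identity-theorem argument on the connected lightsphere. The only difference is packaging: rather than showing each $K_q$ is locally cut out by analytic equations and then proving $\interior_L(K_{q_*})$ is clopen, the paper observes that $\mathcal{T}^{-1}(\mathcal{T}(q_*))$ is a Cauchy surface, so the implicitly defined time function $f$ and hence the analytic map $\psi(v)=\exp_p(f(v)v)$ are defined \emph{globally} on $L$, and a single application of the identity theorem to $\psi$ (constant on $\interior_L(K_{q_*})$, hence constant on $L$) finishes the proof.
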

\begin{proof}
By \cite[Theorem 2.10]{Sánchez2022}, there exists an analytic temporal function $\mathcal{T}:X\to \R$ whose level sets are Cauchy surfaces. Let $p\in X$ and let $\gamma$ be a timelike curve in $X$ such that $(X,g)$ is observer-refocusing with respect to $p$ and $\gamma$. Let $L \subset T_pX$ be some lightsphere at $p$ which is also an analytic submanifold of $T_pX$, i.e. a lightsphere such that the inclusion $\iota:L\to T_p X$ is analytic. Let $D \subset L \times \R$ be the maximal domain of the function $(v,t) \mapsto \exp_p(tv)$. Note that the map $H:D\to T_p X$ defined through $H(v,t) = t\cdot \iota(v)$ is analytic. Consider the map 
$$\phi:D \to \R$$
$$\phi(v,t) = \mathcal{T}(\exp_p(t v))$$
Since $g$ is analytic, so is the exponential map $\exp_p:T_pX \to X$. Hence $\phi=\mathcal{T}\circ \exp_p \circ H$ is analytic as the composition of analytic maps.

\medskip
\noindent By \propref{prop:ThereIsAnOpenSetOfNullVectorsWhoseGeodesicsAllIntersectInTheSamePoint} there is some non-empty open set $V\subseteq L$ and a point $q \in \im(\gamma)$ such that for all $v\in V$ there is some $t>0$ with $\alpha_v(t)=q$. Let $T=\mathcal{T}(q)$. Then, for every $v\in L$, there is a unique $t \in \R$ with $(v,t)\in D$ and $\phi(v,t) = T.$ By the analytic implicit function theorem, there is an analytic map $f:L\to \R$ with $(v,f(v))\in D$ and $\phi(v,f(v)) = T$ for all $v\in L$. Now consider the map
$$\psi:L \to X$$
$$\psi(v) = \exp_p(H(f(v),v))=\exp_p(f(v) v)$$
which is analytic since the maps $\exp_p$, $H$, and $f$ are analytic. Note that for $v \in V$ we must have $\psi(v)=q$, so $\psi$ is constant on $V$. Since $L$ is topologically a sphere of dimension $n-2 \geq 1$, $L$ is connected. Since $\psi$ is analytic on all of $L$ and constant on the non-empty open set $V$, $\psi$ is constant on all of $L$.
\end{proof}

\begin{rem}
Note that we did not need to assume that $(X,g)$ is observer-refocusing with respect to an analytic timelike curve $\gamma$. We only need analyticity of the metric $g$. 
\end{rem}

\begin{cor}\label{cor:AnalyticMetricImpliesZxManifoldsAreYxl}
Let $(M,h)$ be a $Z^{(x,y)}$ manifold of dimension at least $2$ with an analytic metric $h$. Then $(M,h)$ is a $Y^{(x,y)}_l$ manifold for some $l>0$. 
\end{cor}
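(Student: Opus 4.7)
The plan is to reduce this directly to \theoref{thm:AnalyticMetricImpliesObserverRefocusingIsStronglyRefocusing} via the Riemannian-to-Lorentzian bridge already established in the earlier proposition linking $Z^{(x,y)}$ manifolds with observer-refocusing spacetimes. First I would form the associated spacetime $(X,g) = (M \times \R,\ h \oplus -dt^2)$. Because $h$ is analytic and $-dt^2$ is analytic in the product chart, $g$ is an analytic metric on $X$. Since $\dim M \geq 2$, we have $\dim X \geq 3$. Global hyperbolicity of $(X,g)$ is already noted in the remark after the definition of an associated spacetime. Finally, the proposition translating $Z^{(x,y)}$ into observer-refocusing tells us that $(X,g)$ is observer-refocusing with respect to $p = (x,0)$ and $\gamma : \R \to X$, $\gamma(t) = (y,t)$.

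Next I would invoke \theoref{thm:AnalyticMetricImpliesObserverRefocusingIsStronglyRefocusing} to conclude that $(X,g)$ is strongly refocusing. The key point, which I need to extract by inspecting the proof of that theorem rather than the mere statement, is that strong refocusing holds at the same $p$ we started with and at some $q \in R_p^\gamma$: the theorem's proof selects $q$ from $R_p^\gamma$ via the Baire category argument, so $q \in \im(\gamma)$. Therefore $q = (y,l)$ for some $l \in \R$, and $l > 0$ because every future-pointing null-geodesic from $p$ moves strictly into the future for the temporal function $\mathcal{T}(m,s) = s$.

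To translate back to the Riemannian side, recall the parametrization from the earlier proposition: every future-oriented null-geodesic in $(X,g)$ through $p = (x,0)$ can be written $\alpha(t) = (\beta(t), t)$, with $\beta$ a unit-speed geodesic in $(M,h)$ starting at $x$, and every such $\beta$ arises this way. Strong refocusing says $\alpha_v(l) = (y,l)$ for every future-oriented null-direction $v$ at $p$; comparing the $M$-coordinates gives $\beta(l) = y$ for every unit-speed geodesic $\beta$ with $\beta(0) = x$. This is exactly the defining property of a $Y^{(x,y)}_l$ manifold.

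The main step requiring care is the second one: one must verify that the strong refocusing produced by \theoref{thm:AnalyticMetricImpliesObserverRefocusingIsStronglyRefocusing} indeed occurs at the designated pair $(p, q) = ((x,0),(y,l))$ rather than at some unrelated pair elsewhere in $X$. Once that is pinned down using the construction inside the theorem's proof (specifically that the common endpoint lies in $R_p^\gamma \subset \im(\gamma) = \{y\} \times \R$), the corollary follows essentially by unwinding the warped-product formulas. No further analysis of cut loci, fundamental groups, or geodesic completeness is needed here, since all those ingredients have already been absorbed into the Theorem being applied.
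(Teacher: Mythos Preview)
Your proposal is correct and follows exactly the route the paper intends: the paper states the corollary with no proof, treating it as immediate from \theoref{thm:AnalyticMetricImpliesObserverRefocusingIsStronglyRefocusing} via the associated spacetime, and you have filled in precisely those details. Your care in noting that the strong refocusing point $q$ produced by the theorem's proof lies in $R_p^{\gamma}\subset \im(\gamma)$, so that $q=(y,l)$ with $l>0$, is the one nontrivial observation needed and is handled correctly.
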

\begin{proof}
This result follows from the proof of \theoref{thm:AnalyticMetricImpliesObserverRefocusingIsStronglyRefocusing} by considering the associated spacetime $(X,g)$ to the Riemannian manifold $(M,h)$, which will be analytic since $(M,h)$ was analytic, defining $p=(x,0)$, and the standard observer $\gamma(t)=(y,t)$.
\end{proof}

In particular, any $Z^x$ manifold with analytic metric is a $Y^x_l$ manifold for some $l>0$ (the $1$-dimensional case is trivially true even without analyticity). Since \corolref{cor:AnalyticMetricImpliesZxManifoldsAreYxl} might be of independent interest, a self-contained proof of this result, avoiding Lorentzian geometry entirely, is supplied in the appendix.

\section{Analogous Conjectures in the Contact Case}

The results of Chernov and Nemirovski \cite{ChernovNemirovski2010}, the results of Frauenfelder, Labrousse and Schlenk \cite[Theorem 1.11 and 1.13]{FrauenfelderLabrousseSchlenk2015}, and our results about Riemannian $Z^{(x,y)}$ manifolds and observer-refocusing spacetimes in this paper suggest the following conjectures.

\begin{conjec}\label{conj:StrongConjecture}
Let $M$ be a manifold of dimension at least $2$, let $(S^{\ast}M,\xi)$ be the spherical cotangent bundle on $M$ equipped with the standard contact structure. Let $\alpha$ be any contact form on $(S^{\ast}M,\xi)$, i.e. $\xi = \ker(\alpha)$, let $\varphi_{\alpha}^t$ be the Reeb flow of $\alpha$ and let $x,y \in M$. Assume that for all $v\in S^{\ast}_xM$ there is some $t>0$ with $\varphi_{\alpha}^t(v) \in S^{\ast}_yM$. Then $M$ is compact, the fundamental group of $M$ is finite, and the integral cohomology ring of the universal cover of $M$ is the one of a CROSS.
\end{conjec}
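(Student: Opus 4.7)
The plan is to follow the analogy with the Riemannian and Lorentzian results of this paper, interpreting the hypothesis as a contact-geometric $Z$-type condition on the Reeb flow of $\alpha$. When $\alpha$ is the canonical contact form on $S^{\ast}M$ induced by a Riemannian metric $h$, the Reeb flow is (Legendre dual to) the geodesic flow of $h$, and the requirement that every Reeb orbit starting in $S^{\ast}_x M$ return to $S^{\ast}_x M$ is exactly the condition that $(M,h)$ be a $Z^x$ manifold. In that special case the compactness and finite-$\pi_1$ part of the conjecture follows from \corrref{cor:SomeZxManifoldsAreCompactWithFiniteFundamentalGroup} via the associated-spacetime construction already used in this paper, \emph{provided} we know a uniform bound on return times.

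For a general contact form $\alpha$, the first step I would attempt is to build a contact analog of the associated spacetime: equip a suitable thickening of $S^{\ast}M$ (for instance a quotient of its symplectization, or $S^{\ast}M \times \R$ with a Lorentzian metric built from $\alpha$ and an auxiliary transverse vector field) with a globally hyperbolic Lorentzian structure whose future-oriented null geodesics project onto Reeb orbits of $\alpha$, and whose distinguished observer worldline has image $S^{\ast}_x M \times \R$. The conjecture's hypothesis then translates into an observer-refocusing condition, so \theoref{thm:MainResult} and \propref{prop:SomeObserverRefocusingSpacetimesHaveCompactCauchySurfaceWithFiniteFundamentalGroup} would directly supply compactness of the Cauchy surface and finiteness of its fundamental group. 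Passing back to $M$ via the fibration $S^{\ast}M \to M$ recovers the first two conclusions. To get the CROSS cohomology statement on the universal cover $\widetilde{M}$, I would lift $\alpha$ and the fiber condition to $S^{\ast}\widetilde{M}$, where by the previous step $\widetilde{M}$ is closed and simply connected, and then invoke the contact Bott--Samelson theorem of Frauenfelder, Labrousse and Schlenk \cite{FrauenfelderLabrousseSchlenk2015} to identify the cohomology ring with that of a CROSS once a common period has been extracted.

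The main obstacle, at almost every stage, is promoting the pointwise hypothesis ``every Reeb orbit from the fiber returns to the fiber'' to a uniform bound on return time. This is precisely the contact analog of the open question whether every $Z^x$ manifold is a $Y^x_l$ manifold \cite[Question 7.70]{Besse}, and the analyticity trick used in \theoref{thm:AnalyticMetricImpliesObserverRefocusingIsStronglyRefocusing} is not available for a generic smooth contact form; a genuinely new ingredient, such as a Legendrian linking argument in the spirit of Chernov--Nemirovski \cite{ChernovNemirovski2010} adapted to the fiber-refocusing setting, or symplectic-topological rigidity on the symplectization, appears necessary. A secondary difficulty is that, even granted a uniform return bound and finite $\pi_1$, invoking \cite{FrauenfelderLabrousseSchlenk2015} requires the return times to in fact be a common \emph{period} for all orbits through the fiber, not merely a bound on first-return, and bridging this gap is the contact counterpart of the still unresolved jump from $Y^{(x,y)}_l$ to $Y^x_{2l}$ behavior in the Riemannian category.
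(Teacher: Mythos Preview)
The statement you were asked to prove is labeled a \emph{Conjecture} in the paper and is not proved there; the paper presents it explicitly as an open problem suggested by the Riemannian and Lorentzian results and by \cite{ChernovNemirovski2010} and \cite{FrauenfelderLabrousseSchlenk2015}. There is therefore no ``paper's own proof'' to compare against. The paper's surrounding discussion only records that the compactness and finite-$\pi_1$ parts of the \emph{weaker} \conjref{conj:WeakConjecture} (with a uniform return-time bound $T$) have been established in the special case where $\alpha$ comes from a Riemannian metric, and it remarks that if the return-time set $R_x^{\alpha}$ were always discrete then analyticity arguments as in \theoref{thm:AnalyticMetricImpliesObserverRefocusingIsStronglyRefocusing} would reduce the analytic case of \conjref{conj:StrongConjecture} to the Frauenfelder--Labrousse--Schlenk theorem.

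Your proposal is not a proof but a strategy sketch, and you are candid about this: you correctly isolate the two genuine obstructions --- upgrading pointwise return to a uniform bound (the contact analogue of the open $Z^x$ versus $Y^x_l$ question), and then upgrading a uniform bound to a common period so that \cite{FrauenfelderLabrousseSchlenk2015} applies. That diagnosis matches the paper's own assessment. One caution on your proposed Lorentzian construction: the observer in an observer-refocusing spacetime is a single timelike curve, whereas $S^{\ast}_x M \times \R$ is a codimension-one submanifold of $S^{\ast}M \times \R$, so even if you build a globally hyperbolic metric whose null geodesics project to Reeb orbits, the refocusing hypothesis you obtain is not literally of observer type and \theoref{thm:MainResult} would not apply directly; you would need a family of timelike curves sweeping out the fiber, and then the countability hypothesis of \theoref{thm:MainResult} becomes an issue. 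This is a further gap beyond the two you already flagged.
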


\begin{conjec}\label{conj:WeakConjecture}
Let $M$, $\xi$, $\alpha$, $\varphi_{\alpha}^t$ be as in \conjref{conj:StrongConjecture}. Let $x,y \in M$. Assume there is some $T>0$ such that for all $v\in S^{\ast}_xM$ there is some $t\in (0,T]$ with $\varphi_{\alpha}^t(v) \in S^{\ast}_yM$. Then $M$ is compact, the fundamental group of $M$ is finite, and the integral cohomology ring of the universal cover of $M$ is the one of a CROSS.
\end{conjec}

This contact-geometric version generalizes the Riemannian $Z^{(x,y)}$ version as follows: Let $(M,g)$ be a Riemannian manifold, let $\psi:TM\to T^{\ast}M$ be the isomorphism defined through $v\mapsto g(v,\cdot)$. Then $\psi$ induces a bundle-isomorphism (which we will also denote by $\psi$) between the unit-tangent bundle $U_gM$ and the spherical cotangent bundle $S^{\ast}M$. We can define a $1$-form $\alpha$ on $S^{\ast}M$ via $\alpha_{v}(w) = g(\psi^{-1}(v),d\pi(w))$ where $\pi:S^\ast M \to M$ is the natural projection. The Reeb flow of $\alpha$ on $S^{\ast}M$ corresponds to the co-geodesic flow of $g$ on $S^{\ast}M$. \conjref{conj:StrongConjecture} generalizes the conjecture that $Z^x$ manifolds of dimension at least $2$ have finite fundamental group. We have proven the compactness and the fundamental group aspect of \conjref{conj:WeakConjecture} whenever the contact form $\alpha$ comes from a Riemannian metric. The conjecture states that this is really a fact about Reeb flows on $(S^{\ast}M,\xi)$ and not just about geodesics on $M$. 

\begin{rem}\label{rem:WeCanSplitXToAccommodateTimelikeCurves}
For a given globally hyperbolic spacetime $(X,g)$ there are many different splittings of $X = M \times \R$ like in \propref{prop:NullGeodesicFlowInducesPositiveLegendrianIsotopy}, each giving rise to positive Legendrian isotopies $\phi_t$ of $S^{\ast}M$. In particular, if $\gamma$ is any timelike curve in $X$, it is always possible to split $X$ as $M\times \R$ such that in these coordinates $\gamma(t)=(y,t)$, i.e. such that $\gamma$ stays constant in the space-coordinate of the chosen coordinate system. Therefore, whenever we have a globally hyperbolic, observer-refocusing spacetime $X$ with Cauchy surface $M$, there will be $x,y \in M$ and a positive Legendrian isotopy $\phi_t$ on $S^{\ast}M$ such that for all $v\in S^{\ast}_xM$ there is some $t>0$ with $\phi_t(v) \in S_y^{\ast}M$. 
\end{rem}

The following two conjectures further generalize \conjref{conj:StrongConjecture} and \conjref{conj:WeakConjecture} by replacing Reeb flows with positive Legendrian isotopies.

\begin{conjec}\label{conj:StrongConjecture2}
Let $M$ be a manifold of dimension at least $2$, let $(S^{\ast}M,\xi)$ be the spherical cotangent bundle on $M$ equipped with the standard contact structure. Let $\phi_t$ be any positive Legendrian isotopy of $(S^{\ast}M,\xi)$ with $\phi_0=\id$ and let $x,y \in M$. Assume that for all $v\in S^{\ast}_xM$ there is some $t>0$ with $\phi_t(v) \in S^{\ast}_yM$. Then $M$ is compact, the fundamental group of $M$ is finite, and the integral cohomology ring of the universal cover of $M$ is the one of a CROSS.
\end{conjec}

\begin{conjec}\label{conj:WeakConjecture2}
Let $M$, $\xi$, and $\phi_t$ be as in \conjref{conj:StrongConjecture2}. Let $x,y \in M$. Assume there is some $T>0$ such that for all $v\in S^{\ast}_xM$ there is some $t\in (0,T]$ with $\phi_t(v) \in S^{\ast}_yM$. Then $M$ is compact, the fundamental group of $M$ is finite, and the integral cohomology ring of the universal cover of $M$ is the one of a CROSS.
\end{conjec}

We have proven the compactness and fundamental group aspect of \conjref{conj:WeakConjecture2} whenever the positive Legendrian isotopy comes from the construction seen in \propref{prop:NullGeodesicFlowInducesPositiveLegendrianIsotopy}. The conjecture states that this is really a fact about positive Legendrian isotopies of $(S^{\ast}M,\xi)$ and not just about the null-geodesic flow in spacetimes which have $M$ as a Cauchy surface.

\section*{Acknowledgments}

The author thanks Vladimir Chernov for the fruitful idea of investigating the topology of Lorentzian analogues of $Z^x$ manifolds. The author also thanks the anonymous reviewer for pointing out errors in an earlier version of this manuscript, and for multiple helpful suggestions to simplify some proofs.

\appendix
\section*{Appendix}
\setcounter{thm}{0}
\renewcommand{\thesection}{A}

In this appendix we present a proof of \corolref{cor:AnalyticMetricImpliesZxManifoldsAreYxl}, the fact that every analytic $Z^{(x,y)}$ manifold of dimension at least $2$ is a $Y^{(x,y)}_l$ manifold for some $l>0$, that does not rely on Lorentzian techniques. This is done for the benefit of people interested only in the Riemannian problem who might be unfamiliar with Lorentzian geometry. The appendix is self-contained and can be read without reference to the rest of the paper. In this section, if $(M,h)$ is a Riemannian manifold, $x\in M$ and $v\in T_xM$, then $\alpha_v$ denotes the maximal geodesic with $\alpha_v(0)=x$ and $\alpha_v'(0)=v$. All Riemannian manifolds are assumed to be connected and complete. We do not give a self-contained proof of \corolref{cor:EveryTildeZxManifoldIsCompact} because the spacetime viewpoint is more natural in this case; in particular there seems to be no obvious way to distill the Lorentzian proof into a purely Riemannian one.

\begin{lem}[Analogue of \lemref{lem:int(intersection)subsetRefocusing}]\label{lem:int(intersection)subsetRefocusingRiemannianCase}
Let $(M,h)$ be a Riemannian manifold of dimension at least $2$. Let $x,y\in M$. Let $L=\{v\in T_xM \mid h(v,v)=1\}$ and let $T>0$ be such that $\alpha_v(T)$ is defined for all $v\in L$. Let $A=\{v\in L \mid y\in \alpha_v((0,T])\}$ and $B=\{v\in L \mid y \text{ is conjugate to } x \text{ along } \alpha_v|_{(0,T]}\}$. Then $\interior(A)\subseteq B$.
\end{lem}
\begin{proof}
Let $v\in \interior(A)$. Since $\dim(M)\geq 2$ implies $\dim(L)\geq 1$, there is some sequence $(v_n)_{n\in \N}$ in $A$ converging to $v$. For each $n\in \N$ let $t_n \in (0,T]$ be such that $\alpha_{v_n}(t_n) = \exp(t_n v_n)=y$. We can assume that $(t_n)_{n\in \N}$ converges to some $t\in (0,T]$, else go to a convergent subsequence. We see that the exponential function fails to be injective on every neighborhood of $tv\in T_xM$ and conclude that $d\exp_{tv}$ is singular. Therefore $y=\alpha_v(t)$ is conjugate to $x$ along $\alpha_v$ and so $v\in B$.
\end{proof}

The following Lemma is analogous to some arguments made in \theoref{thm:MainResult} and \remref{rem:WeCanGetopenDenseSetWithoutUniformBound}.
\begin{lem}\label{lem:AppendixLemmaWithArgumentsAnalogousToThoseOfMainResultAndARemark}
Let $(M,h)$ be a $Z^{(x,y)}$ manifold and let $L=\{v\in T_x M \mid h(v,v)=1\}$. Then there exists an open dense subset $L' \subseteq L$ such that for all $v\in L'$ we have that $y$ is conjugate to $x$ along $\alpha_v$.
\end{lem}

\begin{proof}
Consider the sets $A_N = \{v\in L \mid y\in \alpha_v((0,N])\}$. It is easy to see that each set $A_N$ is closed. By assumption $L=\bigcup_{N\in \N} A_N$. By the Baire category theorem, the set $L' = \bigcup_{N\in \N} \interior(A_N)$ is an open dense subset of $L$, and by \lemref{lem:int(intersection)subsetRefocusingRiemannianCase} for all $v\in L'$ we have that $y$ is conjugate to $x$ along $\alpha_v$.
\end{proof}

\begin{thm}[Analogue of \theoref{thm:NullConjugateLocusIsNice}]\label{thm:RiemannianConjugateLocusIsNice}
Let $(M,h)$ be a Riemannian manifold. Let $x\in M$ and let $L=\{v\in T_xM \mid h(v,v)=1\}$. Let $v\in L$ be such that the geodesic $\alpha_v$ has at least $k$ points conjugate to $x$. Then there is some neighborhood $U\subseteq L$ of $v$ such that for each $w\in U$, the unit-speed geodesic $\alpha_w$ has at least $k$ conjugate points (counted with multiplicity). Let $0<\lambda_1(w) \leq \lambda_2(w)\leq \dots \leq \lambda_k(w)$ be the parameter values at which $\alpha_w$ hits a conjugate point. Then the functions $\lambda_j$ are continuous on $U$ and they are smooth on an open, dense subset of $U$.
\end{thm}
\begin{proof}
The existence of the neighborhood $U$ and the continuity of the functions $\lambda_j$ are essentially the content of \cite[Proposition 5.1]{Warner1965}. The fact that each $\lambda_j$ is smooth on an open dense set of $U$ follows from \cite[Theorem 3.1]{Warner1965}: The set $\{\lambda_j(v)v \mid v\in U\}$ is a subset of the conjugate locus. Let $C\subseteq T_xM$ denote the conjugate locus of the exponential map. The regular conjugate locus as defined in \cite{Warner1965} is an open dense subset $C_R\subseteq C$ in the conjugate locus and it is a smooth submanifold of $T_xM$ transverse to rays in $T_xM$. For every $j\in \{1,\dots,k\}$ the set $V_j\coloneqq \{\lambda_j(v)v\mid v \in U\}$ is an open subset of $C$, hence $V_j \cap C_R$ is open and dense in $V_j$. Since $V_j \cap C_R$ is also an open subset of $C_R$, it is in particular a smooth submanifold of $T_xM$ transverse to the rays in $T_xM$. For each $j$ let $U_j\coloneqq \{\pi(w)\mid w\in V_j\cap C_R\}$ where $\pi:T_xM\setminus\{0\}\to U$ is the radial projection. Each $U_j$ is open and dense in $U$. By construction $\{\lambda_j(v)v \mid v\in U_j\}=V_j\cap C_R$ is a smooth submanifold of $T_xM$ transverse to the rays in $T_xM$. By the implicit function theorem the map $v\mapsto \lambda_j(v)$ is smooth on $U_j$. The intersection of all the sets $U_j$ gives one open, dense subset of $U$ on which all functions $\lambda_j$ are smooth.
\end{proof}

\begin{lem}[Analogue of \lemref{lem:MapSelectingFirstConjugatePointIsLocallyConstantSomewhere}]\label{lem:MapSelectingFirstConjugateTimeIsLocallyConstantSomewhere}
Let $(M,h)$ be a Riemannian manifold of dimension at least $2$. Let $x,y\in M$ and let $L=\{v\in T_xM \mid h(v,v)=1\}$. Let $U\subseteq L$ be some non-empty open set. Let $f:U\to \R^{+}$ be some lower semi-continuous function such that for all $v\in U$ we have that $\exp(f(v)v)=y$ and such that $y$ is conjugate to $x$ along $\alpha_v|_{[0,f(v)]}$. Then there is some non-empty open set $V\subseteq U$ on which $f$ is constant.
\end{lem}
\begin{proof}
Let the functions $\lambda_j$ be as in \theoref{thm:RiemannianConjugateLocusIsNice}. For $v\in U$ we have that $\exp(f(v)v)$ is a conjugate point along $\alpha_v$. Let $k(v)$ denote the minimal $k$ with $\lambda_k(v)=f(v)$. We will show that $v\mapsto k(v)$ is lower semi-continuous. Let $(v_n)_{n\in \N}$ be some sequence in $U$ converging to some $v\in U$ and let $N\in \N$ with $k_n \coloneqq k(v_n)\leq N$ for all $n\in \N$. Then $f(v_n)=\lambda_{k_n}(v_n) \leq \lambda_N(v_n)$ for all $n\in \N$. By continuity of $\lambda_N$ and lower semi-continuity of $f$, we get that
$$f(v) \leq \liminf_{n\to\infty} f(v_n) \leq \liminf_{n\to \infty} \lambda_N(v_n) = \lambda_N(v).$$
Since $k(v)$ is the smallest $k$ with $\lambda_k(v) = f(v)$, we must have $k(v)\leq N$. We have shown that for all sequences $(v_n)_{n\in \N}\subseteq U$ converging to some $v\in U$ 
$$\{k(v_n)\}_{n\in \N} \subseteq \{1,2,\dots,N\} \implies k(v) \in \{1,2,\dots,N\}.$$
Therefore the function $v\mapsto k(v)$ is lower semi-continuous. For $i\in \N$, let $A_i = \{v\in U \mid k(v)\leq i\}$ and note that $U = \bigcup_{i\in \N} A_i$. By the Baire category theorem, there is some $i\in \N$ such that $A_i$ has non-empty interior. Take $i\in \N$ minimal with this property. Then $V\coloneqq \interior(A_i) \setminus A_{i-1}$ is open. If $\interior(A_i) \setminus A_{i-1} = \emptyset$, then we would have had $\interior(A_i)\subseteq A_{i-1}$ and hence $\interior(A_i)\subseteq \interior(A_{i-1})$, contradicting the minimality of $i$. So $V$ is non-empty. For $v\in V$ we have that $f(v) = \lambda_i(v)$ and by \theoref{thm:RiemannianConjugateLocusIsNice} we can shrink $V$ to some smaller non-empty open set so that $f|_V = \lambda_i|_V$ is smooth. Assume that this $V$ is connected, else take a smaller $V$ that is instead. Let $s\mapsto v_s$ be any $\mathcal{C}^1$ curve in $V$ and let us denote $t=f(v_0)$, $v=f(v_0) v_0$, $w=\frac{d}{ds}|_{s=0}(v_s)$, and $t' = \frac{d}{ds}|_{s=0}f(v_s)=df(w)$. Then, since $v\mapsto\exp(f(v)v)$ is constant on $V$, we have
$$0=\frac{d}{ds}|_{s=0}\exp(f(v_s)v_s) = t d\exp_{v}(w) + t' d\exp_{v}(v).$$
This implies $$t d\exp_v(w) = -t' d\exp_v(v).$$
Taking the dot product with $d\exp_v(v)$ of both sides and applying the Gauss lemma yields
$$t h(w,v)=-t' h(v,v) = -t'.$$
The left hand side of this equation is $0$, as $h(w,v)=\frac{1}{2}\frac{d}{ds}|_{s=0}h(v_s,v_s) = \frac{1}{2}\frac{d}{ds}|_{s=0} 1$. Hence $t'=df(w)=0$. Since this holds for all smooth curves $s\mapsto v_s$ in $V$, we have that $df$ vanishes on $TV$, hence $f$ is constant on $V$.
\end{proof}

\begin{thm}[\corolref{cor:AnalyticMetricImpliesZxManifoldsAreYxl}]\label{thm:AppendixVersionAnalyticMetricImpliesZxManifoldsAreYxl}
Let $(M,h)$ be a $Z^{(x,y)}$ manifold of dimension at least $2$ with an analytic metric $h$. Then $(M,h)$ is a $Y^{(x,y)}_l$ manifold for some $l>0$.
\end{thm}
\begin{proof}
Let $L=\{v\in T_xM \mid h(v,v)=1\}$. By \lemref{lem:AppendixLemmaWithArgumentsAnalogousToThoseOfMainResultAndARemark}, there is an open, dense subset $L'\subseteq L$ such that for all $v\in L'$ we have that $y$ is conjugate to $x$ along $\alpha_v$. We define $f:L' \to \R^+$ through
$$f(v) = \inf\{t>0 \mid \alpha_v(t)=y \text{ and } y \text{ is conjugate to } x \text{ along } \alpha_v|_{[0,t]}\},$$
which is lower semi-continuous. By \lemref{lem:MapSelectingFirstConjugateTimeIsLocallyConstantSomewhere} there is an open, non-empty set $V\subseteq L'$ so that $f|_V$ is constant. Let $l$ be the value $f$ takes on $V$, so that for each $v\in V$ we have that $\alpha_v(l)=\exp(lv)=y$. We define 
$$\psi:L\to M, \quad \psi(v)=\exp(lv).$$
Since $h$ is analytic, $L$ is an analytic submanifold of $T_xM$ and $\exp:TM\to M$ is analytic. Therefore $\psi:L\to M$ is analytic. Since $\psi$ is analytic and constant on a non-empty open set $V\subseteq L$, and since $L$ is a sphere of dimension at least $\dim(M)-1\geq 1$ and hence connected, $\psi$ is constant on all of $L$. Therefore we have that $\alpha_v(l)=\exp(lv)=y$ for all $v\in L$, so $(M,h)$ is a $Y^{(x,y)}_l$ manifold.
\end{proof}


\end{document}